\makeatletter \@addtoreset{equation}{section}
\theoremstyle{definition}
\newtheorem{thm}{Theorem}[section]
\newtheorem{lem}{Lemma}[section]
\newtheorem{pro}{Proposition}[section]
\newtheorem{cor}{Corollary}[section]
\newtheorem{rem}{Remark}[section]
\newcommand{\B}{\mathcal{B}}
\newcommand{\C}{\mathbb{C}}
\title{Jordan-product  commuting nonlinear maps  with  $\lambda$-Aluthge transform }
\author{F. Chabbabi and M. Mbekhta}
\address{Universit\'e  Lille1, UFR de Math\'ematiques, Laboratoire CNRS-UMR 8524 P. Painlev\'e, 59655 Villeneuve Cedex, France}
\email{fadilino@gmail.com}
\email{Mostafa.Mbekhta@math.univ-lille1.fr}
\subjclass[2010]{47A05, 47A10, 47B49, 46L40}
\keywords{normal,  quasi-normal operators, polar decomposition, $\lambda$-Aluthge transform.\\
This work was supported in part by the Labex CEMPI (ANR-11-LABX-0007-01).}
\date{}
\begin{document}
\maketitle

\begin{abstract}
Let $H$ and $K$ be two complex Hilbert spaces and $\B(H)$ be the algebra of  bounded linear operators from $H$ into itself. 

The main purpose in this paper is to obtain a characterization of bijective  maps $\Phi  :\B(H) \to \B(K)$   satisfying the following condition    
\begin{equation*}
\Delta_{\lambda} (\Phi(A)\circ \Phi(B))=\Phi(\Delta_{\lambda}(A\circ B))  \: \text{ for all} \; A, B\in \B(H),
 \end{equation*}
 where   $\Delta_{\lambda}(T)$ stands the $\lambda$-Aluthge transform of the operator $T\in\B(H)$ and $A\circ B=\frac{1}{2}(AB+BA)$ is the Jordan product of  $A$ and $B$. We prove that a bijective map $\Phi$ satisfies the above condition, if and only if there exists an unitary operator $U:H\to K$, such that $\Phi$ has the form   $\Phi(A)=UAU^*$ for all $A\in\B(H)$.
\end{abstract}

\section{Introduction }

 Let $H$ and $K$ be  two complex Hilbert spaces  and  $\mathcal{B}(H, K)$ be the Banach space of all bounded linear operators from $H$ into $K$. In the case
  $K=H$, $\mathcal{B}(H,H)$  is simply denoted by  $\B(H) $ and is a  Banach algebra.
 
  For an arbitrary operator $T\in \B(H, K)$, we denote by $\mathcal{R}(T)$, $\mathcal{N}(T)$ and  $T^*$  the range,  the null subspace and   the operator adjoint  of $T$ respectively. 
  For $T\in \B(H)$, the spectrum of $T$ is denoted by $\sigma(T)$.
 
 An operator $T\in \mathcal{B}(H,K)$ is  a {\it partial isometry} when $T^*T$ is an orthogonal projection (or, equivalently $TT^*T = T$). 
  In particular $T$ is an {\it isometry} if $T^*T=I_{H}$, and {\it unitary} if $T$ is a surjective isometry.

As usually, for $T\in \B(H)$ we denote the module of $T$ by $|T|=(T^*T)^{1/2}$ and we shall always write, without further
mention, $T=V|T|$ to be the unique polar decomposition  of $T$, where $V$ is the appropriate partial isometry satisfying  $\mathcal{N}(V)=\mathcal{N}(T)$. 
The Aluthge transform was   introduced in \cite{alu}  as 
 $$\Delta(T)=|T|^{\frac{1}{2}}V|T|^{\frac{1}{2}}, \quad  T\in \B(H),$$
   to  extend some properties  of hyponormal  operators.
Later, in \cite{oku},    Okubo introduced a more general notion called  $\lambda-$Aluthge
transform which has also been studied in detail.

 For $\lambda \in [ 0, 1]$, the $\lambda-$Aluthge transform is  defined by, 
 $$
 \Delta_{\lambda}(T)=|T|^{\lambda}V|T|^{1-\lambda}, \quad T\in \B(H).
 $$
  Notice  that $\Delta_0(T) =V|T|=T$, and $\Delta_1(T) = |T|V$ which is known as Duggal's
  transform.  It has since been studied in many different contexts and considered by a
  number of authors (see for instance,  \cite{ alu, ay, ams1, ams2,  kp1, kp2,  kp3} and some
  of the references there).  One of the interests  of the Aluthge transform lies in the fact that
   it respects many   properties of the original operator. For example, 
\begin{equation}
   \sigma_*(\Delta_{\lambda}(T)) = \sigma_*(T),   \mbox{ for every } \; \; T \in \B(H), 
\end{equation} 
 where $  \sigma_*$ runs over a large family of spectra. See \cite[Theorems 1.3, 1.5]{kp3}.
 Another important property is that $Lat(T)$, the lattice of $T$-invariant subspaces of
 $H$, is nontrivial if and only if $Lat(\Delta(T))$ is nontrivial
(see \cite[Theorem 1.15]{kp3}). 

   In \cite{bmg},   the authors described  the   linear  bijective mappings on von Neumann algebras which commute with the $\lambda$-Aluthge transform
      $$\Delta_{\lambda}(\Phi (T))=\Phi (\Delta_{\lambda}(T )) \mbox{ for   every }  T \in \B(H). $$

 In \cite{chf} the first author gives a complete description of the bijective maps  $\Phi :\B(H)\to\B(K)$ which satisfies  following condition,
  \begin{equation}
\Delta_{\lambda}(\Phi (A)\Phi (B))=\Phi (\Delta_{\lambda}(A B))   \mbox{ for   every }   A, B\in \B(H),
 \end{equation}
for some  $\lambda \in]0,1[$.

 \bigskip
 
  In this paper we will be concerned with the  Jordan product  commuting maps  with the  $\lambda$-Aluthge transform  in the following sense,
  \begin{equation}\label{c1}
\Delta_{\lambda}(\Phi(A)\circ \Phi(B))=\Phi(\Delta_{\lambda}(A \circ B)) \: \mbox{ for  all}\;  A, B\in \B(H),
 \end{equation}
 where $A\circ B=\frac{1}{2}(AB+BA)$ is the Jordan product of  $A$ and $B$.

Our main result gives a complete description of the bijective maps $\Phi :\B(H)\to\B(K)$ which satisfies Condition (\ref{c1}), the precise theorem being as follows

  \begin{thm} \label{th1}
 Let $H$ and $K$ be  two  complex Hilbert space, such that $H$ is of dimension greater than $2$. Let   $\Phi :\B(H)\to\B(K)$ be a bijective map. Then
  $\Phi$ satisfies  (\ref{c1}) for some  $\lambda \in]0,1[\:$   if and only if  there exists  a unitary operator $U : H\to K $ such that  
 $$\Phi(A)=UAU^*\;\;\;\;\text{ for every} \; A\in\B(H).$$
 \end{thm}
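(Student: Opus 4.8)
The plan is to reduce the nonlinear identity (\ref{c1}) to a purely algebraic one, to identify $\Phi$ as a Jordan $*$-isomorphism of $\B(H)$, and then to invoke the classification of such maps, discarding the unwanted alternative by a direct computation with $\Delta_\lambda$. Throughout I would use three elementary facts about the $\lambda$-Aluthge transform for $\lambda\in]0,1[$: that $\Delta_\lambda(0)=0$ and $\Delta_\lambda(cT)=c\,\Delta_\lambda(T)$ for scalars $c$; that it commutes with unitary conjugation, $\Delta_\lambda(UTU^*)=U\Delta_\lambda(T)U^*$; and, crucially, that its fixed-point set $\{T:\Delta_\lambda(T)=T\}$ is exactly the set of quasinormal operators, which contains every positive operator and every $cI$.

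Normalization comes first. Putting $B=0$ in (\ref{c1}) gives $\Delta_\lambda(\Phi(A)\circ\Phi(0))=\Phi(0)$ for all $A$; choosing $A$ with $\Phi(A)=0$ (surjectivity) makes the left-hand product vanish, so $\Phi(0)=\Delta_\lambda(0)=0$. Writing $W:=\Phi(I)$, the substitution $B=I$ yields the key relation
\begin{equation*}
\Delta_\lambda(\Phi(A)\circ W)=\Phi(\Delta_\lambda(A))\qquad(A\in\B(H)),
\end{equation*}
while $A=B=I$ gives $\Delta_\lambda(W^2)=W$. A normalization argument, exploiting this relation, the injectivity of $\Phi$ and the fixed-point description above, should force $W=I$; I expect this to require some care. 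Once $\Phi(I)=I$, the displayed relation becomes the commutation $\Phi\circ\Delta_\lambda=\Delta_\lambda\circ\Phi$, and (\ref{c1}) collapses to $\Delta_\lambda(\Phi(A)\circ\Phi(B))=\Delta_\lambda(\Phi(A\circ B))$ for all $A,B$.

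The heart of the argument is to upgrade the last identity to the genuine Jordan identity $\Phi(A)\circ\Phi(B)=\Phi(A\circ B)$. Since $\Delta_\lambda$ is highly non-injective (it annihilates, e.g., every nilpotent of order two), I would not cancel it globally. Instead I would first show that $\Phi$ preserves the self-adjoint and positive cones and commutativity, and then restrict to commuting self-adjoint $A,B$: there $A\circ B=AB$ is again self-adjoint, hence quasinormal and fixed by $\Delta_\lambda$, and likewise for $\Phi(A),\Phi(B)$, so the relation reads $\Phi(A)\circ\Phi(B)=\Phi(A\circ B)$. Polarizing via $A\circ B=\tfrac12((A+B)^2-A^2-B^2)$ and using additivity on the positive cone, one passes from commuting self-adjoints to all self-adjoints, and then to all of $\B(H)$ through $T=\mathrm{Re}\,T+i\,\mathrm{Im}\,T$; this exhibits $\Phi$ as a bijective Jordan $*$-isomorphism. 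At this point the classical structure theorem (Herstein/Kadison) applies: $\Phi$ is either $A\mapsto UAU^*$ or $A\mapsto UA^{t}U^*$ for a unitary $U\colon H\to K$ and the transpose $t$ relative to some orthonormal basis. The first option satisfies (\ref{c1}) by the unitary covariance of $\Delta_\lambda$. The second is excluded by feeding it back into (\ref{c1}): since $(A\circ B)^t=A^t\circ B^t$, its survival would force $\Delta_\lambda(X^t)=(\Delta_\lambda(X))^t$ for all $X$, and a direct computation on an invertible non-quasinormal $X$ (using $X=V|X|$, $|X^t|=|X^*|^t$ and $|X^*|=V|X|V^*$) shows this fails for $\lambda\in]0,1[$. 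Hence $\Phi(A)=UAU^*$.

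The step I expect to be the main obstacle is precisely the passage from the $\Delta_\lambda$-level identity to honest Jordan multiplicativity together with the additivity of $\Phi$, carried out without any a priori linearity. Because $\Delta_\lambda$ collapses so much information, the real work lies in isolating the rigid families of operators (positive operators, projections, commuting self-adjoints) on which $\Delta_\lambda$ is the identity, in proving that $\Phi$ respects positivity, orthogonality and addition on them, and then in propagating this structure to all of $\B(H)$; it is here that the hypothesis $\dim H>2$ is needed, to make the Jordan and rank-one structure rigid enough for the classification to apply.
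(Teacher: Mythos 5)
There is a genuine gap, and it sits exactly where you flagged it: everything between the normalization and the final classification is asserted rather than proved, and the assertions do not follow by any soft argument. First, $\Phi(I)=I$ does not simply ``fall out'' of the fixed-point description of $\Delta_{\lambda}$: from $\Delta_{\lambda}(W^2)=W$ alone one cannot conclude $W=I$ (any quasi-normal square root of a quasi-normal operator is a candidate a priori). The paper must first prove that $W$ and $W^*$ are injective --- using surjectivity of $\Phi$, images of rank-one projections, and the lemma $\Delta_{\lambda}(T)=0\Leftrightarrow T^2=0$ --- and then invoke two dedicated lemmas (Lemmas \ref{l33} and \ref{l22}, combined in Corollary \ref{l3}) showing that injectivity of $T,T^*$ together with $\Delta_{\lambda}(T^2)=T$ forces $T=I$. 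Second, and more seriously, your claim that $\Phi$ preserves the self-adjoint cone, the positive cone, and commutativity has no supplied mechanism and does not follow from the fixed-point characterization: that characterization only gives preservation of \emph{quasi-normality} in both directions, and a self-adjoint $A$ merely yields $\Phi(A)$ quasi-normal. Nor can you read such preservation off (\ref{c1}), because knowing $\Delta_{\lambda}(\Phi(A)\circ\Phi(B))$ is quasi-normal says nothing about $\Phi(A)\circ\Phi(B)$ itself: by Proposition \ref{p1}, $\Delta_{\lambda}(x\otimes y)=\frac{<x,y>}{\|y\|^2}\,y\otimes y$ is always normal even when $x\otimes y$ is far from quasi-normal, so $\Delta_{\lambda}$ routinely outputs fixed points from non-fixed inputs. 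Your polarization step then compounds the problem by invoking ``additivity on the positive cone,'' which is precisely the statement one has no right to assume: additivity of $\Phi$ is the crux of the whole theorem for a map with no a priori linearity.

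The paper fills this hole with a concrete chain you would need to replicate or replace: Propositions \ref{l1} and \ref{l2} (rigidity statements for $\Delta_{\lambda}(A\circ P)$ with $P$ a rank-one projection), the construction of a multiplicative bijection $h:\C\to\C$ with $\Phi(\alpha I)=h(\alpha)I$ and $\Phi(\alpha A)=h(\alpha)\Phi(A)$ on quasi-normals (Proposition \ref{cor3}), additivity on orthogonal rank-one projections (Lemma \ref{l4}), the computation of $\Phi$ on $x\otimes x'+x'\otimes x$ yielding $h(2)=2$ (Lemma \ref{l5}) and hence additivity of $h$, and finally the quadratic-form identity $<\Phi(A)y,y>=h(<Ax,x>)$ obtained by a trace argument through the Schur decomposition, which gives continuity of $h$ (so $h$ is the identity or conjugation) and linearity or conjugate-linearity of $\Phi$. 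Your endgame, by contrast, is sound and genuinely parallel to the paper's: once linearity is available, classifying $\Phi$ as a Jordan $*$-isomorphism and excluding the transpose alternative via $\Delta_{\lambda}(X^{t})\neq(\Delta_{\lambda}(X))^{t}$ would work, much as the paper applies the Botelho--Moln\'ar--Nagy theorem to a linear bijection commuting with $\Delta_{\lambda}$ and kills the form $A\mapsto VA^{*}V^{*}$ by a rank-one computation with Proposition \ref{p1}. But since both endgames consume linearity as an input, your proposal postpones rather than bypasses the hard analytic core, and as written the proof does not go through.
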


 The paper is organized as follows. In the second section, we establish some useful results on the Aluthge transform. 
  These results are needed for proving our main theorem in the  section 3.

\section{  Some properties of the  Aluthge transform}

In this section we establish some results,  properties of the  Aluthge transform. These results are necessary for the proof of the main theorem.

We first recall some basic notions that are used in the sequel. An operator $T\in\B(H)$  is  normal if $T^*T=TT^*$,  and  is  quasi-normal, if  it commutes with $T^*T$ ( i.e. $TT^*T=T^*T^2$), or equivalently  $|T|$ and $V$ commutes ($T = V\vert T\vert$  a  polar decomposition of $T$). In finite dimensional spaces  every quasi-normal operator  is normal.  It is easy to see that if $T$ is quasi-normal, then $T^2$ is also quasi-normal, but the converse is false as shown by nonzero  nilpotent operators .

Also, it is well known that  the quasi-normal operators  are exactly the fixed points of  $\Delta_{\lambda}$ (see \cite[Proposition 1.10]{kp3}). That is,
   \begin{equation}\label{qn}
 T \; \; \mbox{ quasi-normal} \; \;  \iff  \Delta_{\lambda}(T) = T.
 \end{equation}

  For nonzero $x,y \in H$ , we denote by  $x\otimes y$ the rank one operator defined by 
 $$(x\otimes y)u=<u,y>x \: \mbox{   for } \:  u\in H.$$
  It is easy to show that every rank one   operator has  the previous form and that   $x\otimes y$ is an orthogonal projection, if and only if $x=y$ and $\|x\|=1$. 

\bigskip

 We start with  the following proposition.  It is found in \cite{chf}.  To facilitate the reading we include  the proof.

  \begin{pro}\label{p1} (\cite{chf})  Let $x,y \in H$ be nonzero vectors. We have 
 $$\Delta_{\lambda}(x\otimes y)=\dfrac{<x,y>}{\|y\|^2}(y\otimes y)\: \:  \mbox{ for every }  \: \lambda \in ]0,1[.$$
\end{pro}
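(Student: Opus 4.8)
The plan is to compute the polar decomposition of the rank-one operator $T = x \otimes y$ explicitly and then substitute into the definition $\Delta_{\lambda}(T) = |T|^{\lambda} V |T|^{1-\lambda}$. The whole argument reduces to the multiplication table for rank-one operators, so the only real work is careful bookkeeping of scalars and products.

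First I would record the basic products. A direct computation gives $T^* = y \otimes x$, and since $(y \otimes x)(x \otimes y) = \|x\|^2 (y \otimes y)$, we obtain $T^*T = \|x\|^2 (y \otimes y)$. Writing $P = \frac{1}{\|y\|^2}(y \otimes y)$, which is the orthogonal projection onto the line $\mathbb{C}y$, this reads $T^*T = \|x\|^2\|y\|^2 P$. Because $P$ is a projection, the continuous functional calculus is immediate, and I get, for every exponent $\alpha > 0$,
$$|T|^{\alpha} = (\|x\|\|y\|)^{\alpha} P = \frac{(\|x\|\|y\|)^{\alpha}}{\|y\|^2}(y \otimes y).$$

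Next I would identify the partial isometry. The candidate is $V = \frac{1}{\|x\|\|y\|}(x \otimes y)$; one checks $V^*V = P$, so $V$ is a partial isometry with $\mathcal{N}(V) = \mathcal{N}(P) = \{y\}^{\perp} = \mathcal{N}(T)$, and a short computation confirms $V|T| = T$. By uniqueness of the polar decomposition this is the correct $V$.

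Finally I would substitute into the definition. For $\lambda \in ]0,1[$ the scalar prefactors coming from $|T|^{\lambda}$, $V$ and $|T|^{1-\lambda}$ collapse to $1$, while the operator part $(y \otimes y)(x \otimes y)(y \otimes y)$ simplifies via $(y \otimes y)(x \otimes y) = \langle x, y\rangle (y \otimes y)$ and $(y \otimes y)^2 = \|y\|^2 (y \otimes y)$. Collecting everything yields exactly $\frac{\langle x, y\rangle}{\|y\|^2}(y \otimes y)$. The only step demanding a little care is the functional calculus producing the fractional powers $|T|^{\alpha}$, which works precisely because $T^*T$ is a scalar multiple of a rank-one projection; everything else is routine rank-one algebra, so I expect no genuine obstacle.
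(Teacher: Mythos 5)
Your proof is correct and takes essentially the same route as the paper: both identify $T^*T$ as a scalar multiple of the rank-one projection onto $\mathbb{C}y$, use functional calculus to get the fractional powers $|T|^{\alpha}$, and substitute into the definition of $\Delta_{\lambda}$, finishing with rank-one multiplication identities. The only cosmetic difference is that you compute the partial isometry $V=\frac{1}{\|x\|\,\|y\|}(x\otimes y)$ explicitly and invoke uniqueness of the polar decomposition, whereas the paper never needs $V$ in closed form: it uses $|T|^{\gamma}=(\|x\|\,\|y\|)^{\gamma-1}|T|$ to reduce $|T|^{\lambda}V|T|^{1-\lambda}$ to a scalar multiple of $|T|\,V|T|=|T|\,T$.
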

\begin{proof}
Denote $T=x\otimes y$, then  
$$T^*T=|T|^2=\|x\|^2(y\otimes y)=\big(\frac{\|x\|}{\|y\|}(y\otimes y)\big)^2\;\;\text{and}\;\;|T| = \frac{\|x\|}{\|y\|}(y\otimes y).$$
It follows that   
$$|T|^2=\|x\|\, \|y\| |T|\;\; \text{and}\;\; |T|^\gamma =(\|x\|\, \|y\|)^{\gamma-1}|T| \: \text{ for every}  \; \gamma>0.$$

Now,  let  $T=V|T|$ be the polar decomposition of $T$. We have 
\begin{align*}
\Delta_{\lambda}(x\otimes y)=\Delta_{\lambda}(T)&=|T|^{\lambda}V|T|^{1-\lambda}\\
&=(\|x\|\, \|y\|)^{{\lambda-1}}(\|x\|\, \|y\|)^{{-\lambda}}|T|V|T|\\
&=\dfrac{1}{\|x\|\, \|y\|}|T|T =\frac{1}{\|y\|^2}(y\otimes y)\circ (x\otimes y)\\
&=\dfrac{<x,y>}{\|y\|^2}(y\otimes y).
\end{align*}
Thus , $\Delta_{\lambda}(x\otimes y)=\dfrac{<x,y>}{\|y\|^2}(y\otimes y)\: $ as desired
\end{proof}

 \begin{pro}\label{l1} Let $A\in \B(H)$ and  $P=x\otimes x$ be a rank one projection on $H$. Then
$$\Delta_{\lambda}(A \circ P) = P  \;\;\text{if and only if}\;\; PA=P.$$
\end{pro}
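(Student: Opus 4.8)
The plan is to treat the two implications separately. Writing $T=A\circ P=\tfrac12(AP+PA)$ and using $P=x\otimes x$ with $\|x\|=1$, one computes $AP=(Ax)\otimes x$ and $PA=x\otimes(A^{*}x)$, so that the hypothesis $PA=P$ is \emph{literally} the statement $A^{*}x=x$. For the easy implication $PA=P\Rightarrow\Delta_{\lambda}(T)=P$ I would assume $A^{*}x=x$; then $T=\tfrac12\big((Ax+x)\otimes x\big)$ is a rank one operator, so Proposition \ref{p1} applies and gives $\Delta_{\lambda}(T)=\tfrac12\big(\langle Ax,x\rangle+1\big)P$. But $A^{*}x=x$ forces $\langle Ax,x\rangle=\langle x,A^{*}x\rangle=1$, whence the scalar equals $1$ and $\Delta_{\lambda}(T)=P$. (The vector $Ax+x$ is nonzero here, since $\langle Ax,x\rangle=1$ rules out $Ax=-x$.)

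For the converse, assume $\Delta_{\lambda}(T)=P$ and set $y=Ax$, $z=A^{*}x$, so $T=\tfrac12(y\otimes x+x\otimes z)$ is finite rank. The first ingredient is a pair of scalar identities obtained from a trace trick: with $F=|T|^{\lambda}$ and $G=V|T|^{1-\lambda}$ one has $T=GF$ and $\Delta_{\lambda}(T)=FG$, hence by cyclicity of the trace $\operatorname{tr}(T^{k})=\operatorname{tr}(\Delta_{\lambda}(T)^{k})=\operatorname{tr}(P^{k})=1$ for every $k\ge1$. Computing $\operatorname{tr}(T)$ and $\operatorname{tr}(T^{2})$ through $\operatorname{tr}(a\otimes b)=\langle a,b\rangle$ then yields $\langle Ax,x\rangle=1$ and $\langle A^{2}x,x\rangle=\langle y,z\rangle=1$.

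The decisive step is a range/eigenvector argument. Decompose relative to $x$: since $\langle Ax,x\rangle=1$ we may write $y=x+c$ and $z=x+c'$ with $c=(I-P)Ax$ and $c'=(I-P)A^{*}x$ orthogonal to $x$, and the identity $\langle y,z\rangle=1$ becomes $\langle c,c'\rangle=0$. Now $\overline{\mathcal R(T)}=\mathrm{span}\{x,c\}$ while the support of $|T|$ is $\overline{\mathcal R(T^{*})}\subseteq\mathrm{span}\{x,c'\}$. Because $\Delta_{\lambda}(T)=|T|^{\lambda}V|T|^{1-\lambda}$ has range contained in $|T|^{\lambda}\big(\overline{\mathcal R(T)}\big)$, and because $c\perp x$ together with $\langle c,c'\rangle=0$ gives $c\perp\mathrm{span}\{x,c'\}$ and hence $|T|^{\lambda}c=0$, we obtain $\mathcal R(\Delta_{\lambda}(T))\subseteq\mathbb{C}\,|T|^{\lambda}x$. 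As $\mathcal R(\Delta_{\lambda}(T))=\mathcal R(P)=\mathbb{C}x$, this forces $|T|^{\lambda}x\in\mathbb{C}x$, i.e. $x$ is an eigenvector of $T^{*}T$. A direct computation gives $T^{*}Tx=\big(1+\tfrac14\|c\|^{2}\big)x+\tfrac12\,c'$, so the eigenvector condition is equivalent to $c'=0$, that is $A^{*}x=x$, which is $PA=P$.

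The main obstacle is exactly this converse. Since $A\circ P$ has rank only at most $2$, its $\lambda$-Aluthge transform cannot be read off directly from Proposition \ref{p1}, and the content of the statement is that the rank must collapse onto the single line $\mathbb{C}x$. The clean way to force this is the range inclusion above, which pins $x$ as an eigenvector of $T^{*}T$; but that inclusion only becomes a one-dimensional constraint \emph{after} one knows $\langle c,c'\rangle=0$. This is why the trace identities must be established first: without the orthogonality $\langle c,c'\rangle=0$ (equivalently $\langle A^{2}x,x\rangle=1$) the range of $\Delta_{\lambda}(T)$ would a priori sit inside a two-dimensional subspace and no contradiction would arise. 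Coordinating the spectral identities with the polar-decomposition/support bookkeeping is the delicate part of the argument.
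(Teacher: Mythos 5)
Your proposal is correct, and while it opens exactly as the paper does, the decisive step of your converse takes a genuinely different route. Both arguments first establish $\langle Ax,x\rangle=\langle A^2x,x\rangle=1$: the paper deduces $\operatorname{tr}(T^2)=\operatorname{tr}(T)=1$ from $\sigma(T)=\sigma(\Delta_\lambda(T))=\{0,1\}$ together with $\operatorname{rank}(T)\le 2$ (which tacitly requires the eigenvalue $1$ to have algebraic multiplicity one), whereas your cyclicity identity $\operatorname{tr}\big((GF)^k\big)=\operatorname{tr}\big((FG)^k\big)$ gives $\operatorname{tr}(T^k)=\operatorname{tr}(P)=1$ for all $k\ge 1$ directly, and is in fact the cleaner justification of the same two scalar identities. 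After that the proofs diverge. The paper works with the rank-one identity $T^2=Tx\otimes T^*x=U|T|^{1-\lambda}x\otimes|T|^{\lambda}x$, extracts $|T|^{\lambda}|T|x=\langle x,|T|^{\lambda}x\rangle\,|T|x$, and invokes the H\"older--McCarthy inequality together with the equality case of Cauchy--Schwarz to force $|T|x=\alpha x$; it then reads $T^*x=\beta x$ off the rank-one equality and gets $\beta=1$ from $\langle T^*x,x\rangle=1$. You never return to $T^2$ after the trace stage: you encode $\langle A^2x,x\rangle=1$ as the orthogonality $\langle c,c'\rangle=0$ for the components $c=(I-P)Ax$, $c'=(I-P)A^*x$, and run a support/range bookkeeping argument --- $\mathcal{R}(\Delta_\lambda(T))\subseteq|T|^{\lambda}\big(\overline{\mathcal{R}(T)}\big)$, and $|T|^{\lambda}c=0$ because $c\perp\operatorname{span}\{x,c'\}\supseteq\overline{\mathcal{R}(T^*)}$ --- to force $|T|^{\lambda}x\in\C x$, finishing with the explicit computation $T^*Tx=\big(1+\tfrac14\|c\|^2\big)x+\tfrac12 c'$, which kills $c'$ since $c'\perp x$. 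Both pivots amount to showing that $x$ is an eigenvector of $|T|$, but your route buys elementarity (only polar-decomposition support facts and functional calculus, no operator inequality), while the paper's buys a shorter endgame, since the rank-one equality hands it $T^*x\in\C x$ at once; your version also makes explicit where the hypothesis $\langle A^2x,x\rangle=1$ enters, namely as the orthogonality that collapses the a priori two-dimensional range to the line $\C\,|T|^{\lambda}x$.
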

   \begin{proof} The "if" part  follows directly from the previous proposition. We show the "only if" part.  Suppose that $\Delta_{\lambda}(A \circ P) = P$.
   First, it is easy to see the following 
       $$ PA = P\:  \iff \: A^*x = x.$$
       So,  to complete the proof of the proposition, it suffices to show that $A^*x = x$.

  Put  $T = A \circ P $. Then  $T = \frac{1}{2}(Ax\otimes x+x\otimes A^*x)$,  and thus
 \begin{equation}\label{eq-}
T^2=\frac{1}{4}(<Ax,x>Ax\otimes x + Ax\otimes A^*x+<A^2x,x> x\otimes x+<Ax,x> x\otimes A^*x).
 \end{equation}
  In the other hand, we have
 $$
  \sigma(T)=\sigma(\Delta_{\lambda}(T))=\sigma(P)=\{0,1\}  \; \; \mbox{ and thus} \;  \sigma(T^2)=\{0,1\}.
   $$
  Since the rank of $T$ is at most $2$,  $tr(T^2)=tr(T)=1$.  We also have $tr(T)=<Ax,x>$ and $tr(T^2)=\frac{1}{2}(<A^2x,x>+<Ax,x>)$. Therefore
  $$<Ax,x> =  <A^2x,x> =1.$$
  It follows that 
  $Tx=\frac{1}{2}(Ax+x)$ and $T^*x=\frac{1}{2}(A^*x+x)$. Hence 
  $$<Tx,x> = 1 \: \mbox{and} \: <T^*x,x>=1.$$
  
  From (\ref{eq-}) we get  
\begin{equation} \label{eq1}
 T^2=\frac{1}{4}(Ax+x)\otimes (A^*x+x)=Tx\otimes T^*x.
\end{equation}

  Let  $T=U|T|$ be the polar decomposition of $T$.  It holds
  \begin{eqnarray*}
  T^2 &=& U|T|U|T|= U|T|^{1-\lambda}(|T|^{\lambda}U|T|^{1-\lambda})|T|^{\lambda}\\
  &=& U|T|^{1-\lambda} \Delta_{\lambda}(T)|T|^{\lambda} =  U|T|^{1-\lambda}(x \otimes  x) |T|^{\lambda}   \\
  & = &    U|T|^{1-\lambda}x \otimes |T|^{\lambda} x.
 \end{eqnarray*}
   From  (\ref{eq1}) we get 
   \begin{equation}\label{eq+}
Tx\otimes T^*x = U|T|^{1-\lambda}x \otimes |T|^{\lambda}x.
   \end{equation}
 Since $<T^*x,x> =1$, we get that  
  $$Tx = <x,|T|^{\lambda}x> U|T|^{1-\lambda}x.$$
 Thus
 $$ |T|^{\lambda}|T|x = |T|^{\lambda}U^*Tx =  <x,|T|^{\lambda}x>U^* U|T|x = <x,|T|^{\lambda}x>|T|x.$$ 
  By function calculus, we obtain 
   $$|T||T|x=( <x,|T|^{\lambda}x>)^{1/\lambda}|T|x.$$
    It follows that
  $$<|T|^2x,x>=\||T|x\|^2=( <x,|T|^{\lambda}x>)^{1/\lambda}<|T|x,x>.$$ 
  
   Now, by Holder-Mc Carthy inequality (see \cite{fur} page 123), we deduce
 
 $$\big{\|}|T|x\big{\|}^2=( <x,|T|^{\lambda}x>)^{1/\lambda}<|T|x,x>\leq (<|T|x,x>)^2\leq \big{\|}|T|x\big{\|}^2.$$
  Hence $\big{\|}|T|x\big{\|}=<|T|x,x>$. Therefore  there exists $\alpha \in \C$ such that $|T|x=\alpha x$.\\
  The last equality and Equation  (\ref{eq+}) imply
  $$Tx\otimes T^*x=\alpha Ux \otimes x.$$
Thus, $T^*x = \beta x$ for some $\beta \in \C$.  Since,  $<T^*x, x> = 1, \;$ it holds $ \; \beta = 1$.  On the other hand $T^*x=\frac{1}{2}(A^*x+x)$. Then
$ \frac{1}{2}(A^*x+x) = x$. Finally we conclude that  $A^*x=x$ and hence  $PA=P$. The proof is completed.
   \end{proof}
   
  \bigskip 
   
   The following lemma gives a property  of  rank one projections
 
    \begin{pro}\label{l2} Let $A\in \B(H)$ and  $P=x\otimes x$ be a rank one projection on $H$. Then
 $$\Delta_{\lambda}(A\circ P)=A \;\text{ if and only if} \;\; A=\alpha P \;\;\text{for some}\; \alpha \in \C.$$
 \end{pro}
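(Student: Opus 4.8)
The plan is to handle the two implications separately; the forward one is immediate and the converse is where the work lies. If $A=\alpha P$, then $P^{2}=P$ gives $A\circ P=\alpha P=A$, and writing $A=(\alpha x)\otimes x$ Proposition~\ref{p1} yields $\Delta_{\lambda}(A)=\frac{\langle\alpha x,x\rangle}{\|x\|^{2}}(x\otimes x)=\alpha P=A$ (using $\|x\|=1$); hence $\Delta_{\lambda}(A\circ P)=A$. For the converse my aim is to turn the operator identity into a $2\times2$ matrix identity, so that Proposition~\ref{p1} and the basic invariants of $\Delta_{\lambda}$ can finish the job.

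To that end I would write $T=A\circ P$ and decompose $H=\mathbb{C}x\oplus x^{\perp}$ with $Q=I-P$. Setting $a=\langle Ax,x\rangle$, $c=QAx$, $b=QA^{*}x$ and $D=QAQ$, a direct computation gives
$$A=aP+c\otimes x+x\otimes b+D,\qquad T=aP+\tfrac12\bigl(c\otimes x+x\otimes b\bigr),$$
so $T$ has rank at most $2$ and $\overline{\mathcal{R}(T^{*})}\subseteq\operatorname{span}\{x,b\}$. The structural point is that both $\Delta_{\lambda}(T)=|T|^{\lambda}V|T|^{1-\lambda}$ and its adjoint $|T|^{1-\lambda}V^{*}|T|^{\lambda}$ have range contained in $\mathcal{R}(|T|^{\lambda})=\mathcal{R}(|T|^{1-\lambda})=\overline{\mathcal{R}(T^{*})}$. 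Thus the hypothesis $A=\Delta_{\lambda}(T)$ forces $\mathcal{R}(A)$ and $\mathcal{R}(A^{*})$ to lie in $M:=\operatorname{span}\{x,b\}$, a space of dimension at most $2$; in particular $A$ is finite rank. Reading off $\mathcal{R}(A)\subseteq M$ gives $c\in\mathbb{C}b$ and $\mathcal{R}(D)\subseteq\mathbb{C}b$, and together with $\mathcal{R}(A^{*})\subseteq M$ one gets $D=\delta\,(e_{2}\otimes e_{2})$ with $e_{2}=b/\|b\|$ when $b\neq0$. If $b=0$ then $M=\mathbb{C}x$, whence $c=0$, $D=0$ and $A=aP$, which is the claim, so it remains to rule out $b\neq0$.

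In that remaining case $T$ is supported on the two-dimensional space $M$, so $\Delta_{\lambda}(T)$ is the zero-extension of the Aluthge transform of the compression $T_{0}=T|_{M}$, and the hypothesis becomes $\Delta_{\lambda}(T_{0})=A_{0}$, where in the orthonormal basis $\{e_{1}=x,\ e_{2}=b/\|b\|\}$, with $\beta=\|b\|$ and $c=\tilde\gamma\,e_{2}$,
$$T_{0}=\begin{pmatrix} a & \tfrac12\beta\\ \tfrac12\tilde\gamma & 0\end{pmatrix},\qquad A_{0}=\begin{pmatrix} a & \beta\\ \tilde\gamma & \delta\end{pmatrix}.$$
Now I would exploit that $\Delta_{\lambda}$ preserves the trace (by cyclicity, $\mathrm{tr}\,\Delta_{\lambda}(T_{0})=\mathrm{tr}(V|T_{0}|)=\mathrm{tr}\,T_{0}$) and the determinant (by multiplicativity, $\det\Delta_{\lambda}(T_{0})=\det|T_{0}|\det V=\det T_{0}$). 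Trace invariance gives $a+\delta=a$, so $\delta=0$; determinant invariance then gives $-\beta\tilde\gamma=-\tfrac14\beta\tilde\gamma$, hence $\beta\tilde\gamma=0$, and since $\beta\neq0$ we get $\tilde\gamma=0$. Consequently $T_{0}=e_{1}\otimes z$ is rank one with $z=\bar a\,e_{1}+\tfrac12\bar\beta\,e_{2}$, and Proposition~\ref{p1} gives $\Delta_{\lambda}(T_{0})=\frac{a}{\|z\|^{2}}(z\otimes z)$. Its range is $\mathbb{C}z\neq\mathbb{C}e_{1}=\mathcal{R}(A_{0})$ when $a\neq0$ (because $\beta\neq0$), while it is $0\neq A_{0}$ when $a=0$; either way this contradicts $\Delta_{\lambda}(T_{0})=A_{0}$. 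Therefore $b=0$ and $A=aP$, as required.

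The hard part will be the reduction: one must verify that not only the range but also the co-range of $\Delta_{\lambda}(T)$ is trapped inside $\overline{\mathcal{R}(T^{*})}$, since that is exactly what collapses the functional equation to a $2\times2$ matrix identity on $M$. Once this is in place, the invariance of trace and determinant under $\Delta_{\lambda}$ together with the explicit rank-one formula of Proposition~\ref{p1} eliminate the borderline case $b\neq0$ with little further computation.
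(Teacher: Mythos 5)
Your proof is correct, and although it opens exactly as the paper does, it finishes the hard implication by a genuinely different mechanism. Both arguments rest on the same structural fact: since $A=\Delta_{\lambda}(T)$ with $T=A\circ P$, the ranges of $A$ and $A^{*}$ are trapped in $\overline{\mathcal{R}(T^{*})}\subseteq\operatorname{span}\{x,A^{*}x\}$ (your $M=\operatorname{span}\{x,b\}$), and both use trace invariance $\operatorname{tr}\Delta_{\lambda}(T)=\operatorname{tr}T$ to kill one coefficient (your $\delta=0$ is the paper's $\langle Ae,e\rangle=0$). The divergence is in what replaces the paper's two-case analysis: assuming $A^{*}x\notin\mathbb{C}x$, the paper splits on $A^{*}e$, handling $A^{*}e=0$ by a linear-dependence contradiction (essentially your final rank-one step via Proposition~\ref{p1}) and handling $A^{*}e\neq 0$ through the norm equalities $\|A\|=\|Ax\|=\|A^{*}x\|$, a computation with $AA^{*}$, and the conclusion $A=2T$, contradicted by $\|A\|=\|\Delta_{\lambda}(A/2)\|\leq\|A\|/2$. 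You bypass that second case entirely by noting that $\mathcal{R}(T),\mathcal{R}(T^{*}),\mathcal{R}(A),\mathcal{R}(A^{*})\subseteq M$, so $M$ reduces both operators, $|T|=|T_{0}|\oplus 0$, $V=V_{0}\oplus 0$, and the hypothesis collapses to the $2\times 2$ identity $\Delta_{\lambda}(T_{0})=A_{0}$; there, determinant invariance $\det\Delta_{\lambda}(T_{0})=\det(V_{0})\det|T_{0}|=\det T_{0}$ forces $\tilde\gamma=0$, after which Proposition~\ref{p1} yields the range contradiction in both subcases $a\neq 0$ and $a=0$. Your route buys a shorter, more conceptual elimination of the difficult case, since trace and determinant are both $\Delta_{\lambda}$-invariants of the compression and no norm estimates or $AA^{*}$ computations are needed; the paper's route avoids the (easy but necessary) verification that the Aluthge transform commutes with zero-extension to a reducing subspace, a point you correctly flag and which does hold here. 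One cosmetic remark: in the forward direction, Proposition~\ref{p1} requires nonzero vectors, so the case $\alpha=0$ should be dispatched separately by $\Delta_{\lambda}(0)=0$ (or by quasi-normality of $\alpha P$ via $(2.1)$).
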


 \begin{proof}
   The "if" part  is obvious. We show the "only if" part. Put $T= A\circ P$. Then 
   $$T =\frac{1}{2}(Ax\otimes x+x\otimes A^*x) \; \;  \mbox{and} \; \;   T^* =\frac{1}{2}(x\otimes Ax + A^*x\otimes x).$$
   By assumption,  $A = \Delta_{\lambda}(T)$.  
  Since  $\mathcal{R}( \Delta_{\lambda}(T)), \mathcal{R}( \Delta_{\lambda}(T)^*) \subseteq \mathcal{R}(T^*)$,  we have
  
 $$\mathcal{R}(A), \mathcal{R}(A^*)\subseteq \mathcal{R}(T^*)\subseteq H_0 := span\{x,A^*x\}.$$  
 Note that  $H_0$ is an invariant subspace of $A$ and $A^*$. 
  
  {\bf  Claim:}   there exists $\delta\in\mathbb{C}$  such that $A^*x=\delta x$. 
  In this case, $A$ and $A^*$ are rank one operators. Moreover, their ranges  $\mathcal{R}(A)$ and  $\mathcal{R}(A^*)$ are generated by $x$. It is easy to deduce from this, that
   $A=\overline{\delta}P.$
  
  We prove the claim by contradiction. 
Assume on contrary that $A^*x$ and $x$ are linearly independent. Let $\{x, e\}$ be an orthonormal basis of $H_0$.  We can choose $e \in H_0$  such that  $<e,Ax>\geq 0$. 
In this basis $A$ has the following form
 \begin{equation}\label{eq3}
A=x\otimes A^*x +e\otimes A^*e.  
 \end{equation}
 Let us consider the following cases : 
 
 \textbf{Case 1 :}  $A^*e=0$. In this case,  we show $A^*x$ and $x$ are linearly dependent, which is a contradiction. 

From (\ref{eq3}), we have $A=x\otimes A^*x$. Hence $$T=P\circ A= 1/2 ( x\otimes A^*x+<Ax,x>x\otimes x)=1/2 x\otimes\big(A^*x+<A^*x,x>x\big).$$
Since $\mathcal{R}(A) \subseteq \mathcal{R}(T^*)$, then  $x$ and $A^*x+<A^*x,x>x$ 
are linearly dependent it follows that $x$ and $A^*x$ are also linearly dependent,  which is a contradiction. 
 
\textbf{Case 2 :}  $A^*e\ne 0$. In this case,  we show that $A=2T$ which is a contradiction  with the fact that  
$\Delta_{\lambda}(T)=\Delta_{\lambda}(A/2)=A,$ and 
$$\|A\|=\|\Delta_{\lambda}(A/2)\|\leq \frac{\|A\|}{2}.$$ 
Thus  $A=0$,  which is not possible.
  
  Note that since $A=\Delta_{\lambda}(T)$ it holds 
   $$\|A\|=\|\Delta_{\lambda}(T)\|\leq  \|T\|  =   \|\frac{1}{2}(Ax\otimes x+x\otimes A^*x)\|\leq\frac{1}{2}(\|Ax\|+\|A^*x\|)\leq \|A\|.$$
  Hence
   \begin{equation}\label{eq2}
  \|A\|=\|Ax\|=\|A^*x\|.
  \end{equation}  

Now, by (\ref{eq3}), we have 

$$
A=2T \Leftrightarrow e\otimes A^*e=Ax\otimes x \Leftrightarrow
\left\{
      \begin{aligned}
        Ax= a e ;\\
        A^*e= \bar{a} x,\\       
      \end{aligned}
    \right. \;\;  \mbox { for some } a \in \C.
  $$
So, we just need to prove  the equivalence.
First,  we show that 
$$A^*e=\|A^*e\| x.$$

 Indeed by assumption, we get 
$$tr(A)=tr(\Delta_{\lambda}(T)) = tr(T) = <Ax,x>.$$
  By (\ref{eq3}), we get $$tr(A)=<Ax,x> + <Ae,e>.$$  
  Thus $<Ae,e>=0$.  Since $A^*e \in H_0$ and $<e,Ax>\geq 0$, it follows that
 \begin{equation}\label{eq4}
A^*e=<A^*e,x>x=\|A^*e\|x. 
 \end{equation}
 
Second, we show $$Ax=\|A^*e\| e.$$ Indeed 
 From (\ref{eq3}), we get 
 \begin{equation*}
  AA^*=x\otimes AA^*x +e\otimes AA^*e.
 \end{equation*}
 Thus
$$AA^*x=\|A^*x\|^2 x+<x,AA^*e>e,$$
moreover, by (\ref{eq2})  it follows that 
  $$\|AA^*x\|^2=\|A^*x\|^4 +|<x,AA^*e>|^2\leq \|AA^*\|^2=\|A\|^4=\|A^*x\|^4.$$
 Therefore
 $$<x,AA^*e>=0.$$
 This, together  with the fact $A^*e=\|A^*e\| x$,  show that 
   $$ AA^*x=\|A\|^2 x \;\;\mbox{ and}\;\;  AA^*e=\|A^*e\|^2e=\|A^*e\| Ax.$$
 It follows that 
\begin{equation} \label{eq4'}
 Ax=\|A^*e\|e.
\end{equation}

   This completes the proof.
  \end{proof}

 \begin{rem}  If $T = V \vert T\vert \in \B(H)$ is the polar decomposition of $T$,  then $V$ is unitary if and only if  $T$ and $T^*$ are one-to-one.
  
 \end{rem}  
  
   \begin{lem}\label{l33} Let $T\in\B(H)$ and $\lambda \in]0,1[$. Suppose that $T$ and $T^*$ are one-to-one. Then,
  $$\Delta_{\lambda}(T^2)=T \;\;  \Longrightarrow  \;\; T^2 =  T^*.$$ 
  \end{lem}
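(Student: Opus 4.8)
The plan is to combine the polar decomposition of $T^2$ with the hypothesis $\Delta_{\lambda}(T^2)=T$, extract a purely algebraic identity relating the positive part and the polar part of $T^2$, and then massage it into $T^2=T^*$. First I would fix notation. Since $T$ and $T^*$ are one-to-one, so are $T^2$ and $(T^2)^*=(T^*)^2$; hence, by the preceding Remark applied to $T^2$, the partial isometry $W$ in the polar decomposition $T^2=WR$, where $R=|T^2|\geq 0$, is \emph{unitary}. Observe that $R$ is one-to-one (because $T^2$ is), so for $\lambda\in\,]0,1[$ each fractional power $R^{\lambda}$ and $R^{1-\lambda}$ is a one-to-one positive operator and therefore has dense range. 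In this notation the hypothesis reads $T=\Delta_{\lambda}(T^2)=R^{\lambda}WR^{1-\lambda}$.

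Next I would compute $T^2$ in two ways. On one hand $T^2=WR$; on the other, squaring the expression for $T$ gives $T^2=R^{\lambda}WR^{1-\lambda}R^{\lambda}WR^{1-\lambda}=R^{\lambda}WRWR^{1-\lambda}$. Equating these and writing $WR=(WR^{\lambda})R^{1-\lambda}$, I obtain $(R^{\lambda}WRW-WR^{\lambda})R^{1-\lambda}=0$. Because $R^{1-\lambda}$ has dense range, the left-hand factor must vanish, so $R^{\lambda}WRW=WR^{\lambda}$. Right-multiplying by $W^*$, where the unitarity of $W$ is essential, yields $R^{\lambda}WR=WR^{\lambda}W^*$. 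Taking adjoints of this identity gives $RW^*R^{\lambda}=WR^{\lambda}W^*$, and comparing the two I conclude $R^{\lambda}WR=RW^*R^{\lambda}$.

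To finish, I would rewrite the right-hand side as $RW^*R^{\lambda}=R^{\lambda}\bigl(R^{1-\lambda}W^*R^{\lambda}\bigr)$, so that the last identity becomes $R^{\lambda}(WR)=R^{\lambda}\bigl(R^{1-\lambda}W^*R^{\lambda}\bigr)$. Cancelling the one-to-one operator $R^{\lambda}$ on the left then gives $WR=R^{1-\lambda}W^*R^{\lambda}$. Since $WR=T^2$ and $R^{1-\lambda}W^*R^{\lambda}=(R^{\lambda}WR^{1-\lambda})^*=T^*$, this is exactly $T^2=T^*$, as required.

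The computations are short, and the only genuinely delicate point — the main obstacle I would flag — is the handling of the possibly non-invertible positive operator $R=|T^2|$: one cannot simply multiply through by $R^{-\lambda}$ or $R^{\lambda-1}$, since $T^2$ need not be bounded below. The clean way around this, which I would stress, is that the one-to-one-ness of $R$ forces $R^{\lambda}$ and $R^{1-\lambda}$ to have dense range, so that the right-cancellation of $R^{1-\lambda}$ and the left-cancellation of $R^{\lambda}$ are both justified by continuity rather than by inversion. The other structural ingredient, namely that the polar part $W$ of $T^2$ is unitary, is precisely what the hypotheses on $T$ and $T^*$ provide through the Remark, and it is what makes the passage $R^{\lambda}WRW=WR^{\lambda}\Rightarrow R^{\lambda}WR=WR^{\lambda}W^*$ legitimate.
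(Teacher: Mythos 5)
Your proof is correct and takes essentially the same route as the paper's: both rest on the polar decomposition $T^2=WR$ with $W$ unitary (via the Remark), arrive at the identity $R^{\lambda}WRW=WR^{\lambda}$, conjugate by $W^*$ and take adjoints to exploit the self-adjointness of $WR^{\lambda}W^*$, and conclude by recognizing $R^{1-\lambda}W^*R^{\lambda}=T^*$ and cancelling the injective factor $R^{\lambda}$. The only cosmetic difference is that you obtain the intermediate identity in one step by squaring $T=\Delta_{\lambda}(T^2)$, whereas the paper derives it by right-multiplying the hypothesis by $|T^2|^{\lambda}$ and separately cancelling $|T^2|^{1-\lambda}$ in $T\,\Delta_{\lambda}(T^2)=T^2$.
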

  \begin{proof}  Let us consider $T^2=U|T^2|$ the polar decomposition of $T^2$. Since $T$ and $T^*$ are  injective, then $T^2$ and $(T^2)^*$ are also injective.
   Thus $U$ is unitary operator.  We have
  \begin{align*}
   \Delta_{\lambda}(T^2) =  |T^2|^{\lambda}U|T^2|^{1-\lambda} = T & \Longrightarrow   |T^2|^{\lambda}U|T^2|^{1-\lambda}   |T^2|^{\lambda} = T  |T^2|^{\lambda}\\
    & \Longrightarrow   |T^2|^{\lambda}U |T^2| = T  |T^2|^{\lambda}\\
     & \Longrightarrow   |T^2|^{\lambda}T^2 = T  |T^2|^{\lambda}.
  \end{align*}
  On the other hand, $T \Delta_{\lambda}(T^2)=T^2$, hence
    \begin{align*}  T |T^2|^{\lambda}U|T^2|^{1-\lambda} = T^2 = U|T^2| = U  |T^2|^{\lambda}|T^2|^{1-\lambda} 
   \end{align*}
   Since  $T^2$ is injective, we get
   $$ T |T^2|^{\lambda}U = U  |T^2|^{\lambda}.$$
   Thus, 
   $$  |T^2|^{\lambda} T^2 U = U  |T^2|^{\lambda} \; \; \text{and} \; \;   |T^2|^{\lambda}  T^2= U  |T^2|^{\lambda} U^* \geq 0.$$
 It follows that
   \begin{align*}
    |T^2|^{\lambda}  T^2 = T^{*2}  |T^2|^{\lambda} & =  |T^2| U^*  |T^2|^{\lambda} \\
     & =  |T^2|^{\lambda}   |T^2|^{1 - \lambda} U^*  |T^2|^{\lambda} \\
     & =  |T^2|^{\lambda}  \Delta_{\lambda}(T^2) ^* \\
     & =  |T^2|^{\lambda} T^{*}.
  \end{align*}
  Finally we obtain  $ T^2 = T^*$, as claimed.
    \end{proof}
   
     \begin{lem}\label{l22} Let $S\in\B(H)$ and $\lambda \in]0,1[$. Suppose that $S$ and $S^*$ are one-to-one. Then,
  $$\Delta_{\lambda}(S)=S^* \;\;  \Longrightarrow  \;\; S =  S^*.$$ 
  \end{lem}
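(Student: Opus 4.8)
The plan is to reduce everything to the polar decomposition and then extract an intertwining relation that can be iterated into commutation. Write $S = V|S|$ and put $P = |S|$. Since $S$ and $S^*$ are one-to-one, the Remark preceding the statement tells us that $V$ is unitary, while $P$ is positive and injective. The hypothesis $\Delta_{\lambda}(S) = S^*$ then becomes the operator identity $P^{\lambda} V P^{1-\lambda} = P V^*$, and the goal is to show that $S$ is self-adjoint, i.e. that $V$ commutes with $P$ and is itself self-adjoint.

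First I would strip away the outer powers to isolate a single clean relation. Right-multiplying the identity by $V$ and using $V^*V = I$ gives $P^{\lambda}\,(V P^{1-\lambda} V) = P = P^{\lambda} P^{1-\lambda}$; because $P^{\lambda}$ is injective it cancels on the left, leaving $V P^{1-\lambda} V = P^{1-\lambda}$. Abbreviating $Q = P^{1-\lambda}$ (still positive and injective), this is the single symmetric relation $VQV = Q$, or equivalently $QV = V^* Q$.

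The heart of the argument is to turn $QV = V^*Q$ into commutation and self-adjointness of $V$, without ever inverting $P$. I would set $R = QV$. The relation $QV = V^*Q$ says exactly that $R = R^*$, and combining the two expressions $R = QV = V^* Q$ yields $R^2 = (QV)(V^*Q) = Q(VV^*)Q = Q^2$; since $Q \geq 0$ this forces $Q = (R^2)^{1/2} = |R|$. A self-adjoint operator commutes with its modulus, so $RQ = QR$; writing this as $QVQ = Q^2 V$ and cancelling the injective $Q$ on the left gives $VQ = QV$. Substituting back into $QV = V^*Q$ and cancelling $Q$ once more produces $V = V^*$. Finally, since $V$ commutes with $Q = P^{1-\lambda}$ it commutes with $P$ by functional calculus, whence $S^* = P V^* = PV = VP = S$, as claimed.

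The main obstacle is precisely that $P = |S|$ is only injective, not invertible, so one may not divide by fractional powers of $P$ and every cancellation must be justified by injectivity. The device that sidesteps the fractional exponents altogether is the recognition that $R = QV$ is self-adjoint with $R^2 = Q^2$: this converts the awkward relation living at exponent $1-\lambda$ into the two clean facts $Q = |R|$ and $RQ = QR$, from which the commutation $VQ = QV$ and the symmetry $V = V^*$ fall out immediately.
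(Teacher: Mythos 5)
Your proof is correct, but it closes the argument by a genuinely different mechanism than the paper. The paper multiplies the hypothesis by its adjoint: from $\Delta_{\lambda}(S)\Delta_{\lambda}(S)^{*}=S^{*}S=|S|^{2}$ it cancels $|S|^{\lambda}$ on both sides (using injectivity and, implicitly, dense range of $|S|^{\lambda}$) to obtain $U|S|^{2(1-\lambda)}U^{*}=|S|^{2(1-\lambda)}$, hence $U|S|^{2(1-\lambda)}=|S|^{2(1-\lambda)}U$; continuous functional calculus then gives $U|S|=|S|U$, i.e.\ $S$ is quasi-normal, and the conclusion follows in one line from the fixed-point characterization $(2.2)$: $S=\Delta_{\lambda}(S)=S^{*}$. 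You instead reduce the hypothesis to the symmetric relation $VQV=Q$ with $Q=|S|^{1-\lambda}$ and exploit the self-adjoint operator $R=QV$: from $R^{2}=Q^{2}$ and uniqueness of positive square roots you get $Q=|R|$, hence $RQ=QR$, hence $VQ=QV$ and $V=V^{*}$, and you conclude $S=S^{*}$ by direct computation. Your route is self-contained --- it never invokes the quasi-normal fixed-point property --- and it yields the extra information $V=V^{*}$, at the cost of a few more steps; the paper's route is shorter but leans on the cited characterization of quasi-normal operators. One micro-point worth making explicit: your final cancellation of $Q$ on the right in $(V-V^{*})Q=0$ requires dense range, not injectivity per se; for your positive injective $Q$ this is automatic, since $\overline{\mathcal{R}(Q)}=\mathcal{N}(Q)^{\perp}=H$, so nothing breaks --- but since you stressed that every cancellation is justified by injectivity, note that this right-cancellation is the one spot where self-adjointness of $Q$ is also used.
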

  \begin{proof}  Let us consider $S=U|S|$ the polar decomposition of $S$. Since $S$ and $S^*$ are  injective,  $U$ is unitary operator.  We have,
 $$\Delta_{\lambda}(S) \Delta_{\lambda}(S)^* = S^*S = |S|^2 =  |S|^{\lambda}   |S|^{1 - \lambda}  |S|^{1 - \lambda}   |S|^{\lambda}.$$
 By a simple calculation, we deduce
 $$ U  |S|^{2(1 -\lambda)}  = |S|^{2(1 -\lambda)}U$$
 By the continuous functional calculus, we obtain $S$ quasi-normal. \\
 Consequently, $ S = \Delta_{\lambda}(S) = S^*$, as desired.
  \end{proof}

 \begin{rem}  
  It is well known that  $T$ is quasi-normal  does not imply that his adjoint $T^*$  is always quasi-normal. 
  
  For example :  take the right Shift operator $S$ on a separable Hilbert  space $H$,  with orthonormal bas is  $(e_n)_n$.  We have 
  $$S^*S=I \: \text{ and} \: SS^*=I-P_1$$
   where  $P_1$ is the projection onto  the span of the first vector $e_1$.  Hence 
   $$S^*SS^*=S^* \: \text{and} \:  S(S^*)^2=S^*-P_1S^*\neq S^*.$$
       This shows that $S^*$ is not quasi-normal.
       
       \bigskip
       
        Now, we replace the condition $S$ and $S^*$ are injective in preceding lemma by $S$ is quasi-normal. We have the following result
  
   \end{rem}

 \begin{lem}\label{l22}
  Let $S\in\B(H)$ and $\lambda \in]0,1[$. Suppose that $S$ is quasi-normal. Then
  $$\Delta_{\lambda}(S^*)=S \;\;  \Longrightarrow  \;\; S =  S^*.$$ 
 \end{lem}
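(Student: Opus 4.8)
The plan is to push everything through the polar decomposition $S=V|S|$, where quasi-normality means precisely that $V$ and $|S|$ commute, and to compute $\Delta_{\lambda}(S^*)$ explicitly. First I would record the consequences of quasi-normality: from $V|S|=|S|V$ one gets $V^*|S|=|S|V^*$ by taking adjoints, and hence the final projection $Q:=VV^*$ commutes with $|S|$, since $Q|S|=VV^*|S|=V|S|V^*=|S|VV^*=|S|Q$. From $SS^*=V|S|^2V^*=|S|^2Q$ I obtain $|S^*|=|S|Q$ (so $|S^*|^{\gamma}=|S|^{\gamma}Q$ for every $\gamma>0$), and one checks that $S^*=V^*|S^*|$ is exactly the polar decomposition of $S^*$, because $V^*Q=V^*VV^*=V^*$ and $\mathcal{N}(V^*)=\mathcal{R}(V)^{\perp}=\mathcal{N}(S^*)$.

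With these identities in hand I would compute directly
\[
\Delta_{\lambda}(S^*)=|S^*|^{\lambda}\,V^*\,|S^*|^{1-\lambda}=|S|^{\lambda}Q\,V^*\,|S|^{1-\lambda}Q=|S|QV^*,
\]
where the last step uses that $V^*$ and $Q$ each commute with $|S|^{\gamma}$, together with $V^*Q=V^*$; note this expression is independent of $\lambda$. Imposing the hypothesis $\Delta_{\lambda}(S^*)=S=|S|V$ then gives $|S|\bigl(VV^*V^*-V\bigr)=0$, and taking adjoints in $\Delta_{\lambda}(S^*)=S$ gives in the same way $|S|\bigl(VQ-V^*\bigr)=0$. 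The key simplification is that $\mathcal{N}(|S|)=\mathcal{N}(V)$, so $|S|X=0$ is equivalent to $VX=0$; applying this to the two relations turns them into the purely isometric identities $V^2(V^2)^*=V^2$ and $V^2(VV^*)=VV^*$.

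Finally I would read off the conclusion. The first identity says $YY^*=Y$ for $Y:=V^2$, so $Y$ is self-adjoint and idempotent, i.e.\ an orthogonal projection, with $\mathcal{R}(Y)\subseteq\mathcal{R}(V)\subseteq\mathcal{R}(Q)$, whence $Y\le Q$; the second identity says $YQ=Q$, whence $Q\le Y$. Therefore $V^2=Y=Q=VV^*$, so $V(V-V^*)=V^2-VV^*=0$; using once more $\mathcal{N}(|S|)=\mathcal{N}(V)$ this yields $|S|V=|S|V^*$, that is $S=|S|V=|S|V^*=S^*$, as claimed. The main obstacle I anticipate is the bookkeeping in the computation of $\Delta_{\lambda}(S^*)$: since $V$ is only a partial isometry and not unitary, $VV^*$ and $V^*V$ are genuinely distinct projections, and one must invoke the identities $V^*VV^*=V^*$, $VV^*V=V$ and the commutation of $Q$ with $|S|$ in exactly the right places. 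Once the formula $\Delta_{\lambda}(S^*)=|S|QV^*$ is secured, the remainder is a short argument about nested projections.
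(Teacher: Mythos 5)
Your proof is correct and takes essentially the same route as the paper: both pass through the polar decomposition $S=V|S|$, use the commutation relations forced by quasi-normality (together with $|S^*|^{\gamma}=V|S|^{\gamma}V^*=|S|^{\gamma}VV^*$) to compute $\Delta_{\lambda}(S^*)$ explicitly as $|S|\,V(V^*)^2$ --- the paper's $U(U^*)^2|S|$ after commuting $|S|$ through --- and then cancel the modulus using $\mathcal{N}(|S|)=\mathcal{N}(V)$. The only difference is in the final bookkeeping: where the paper left-multiplies by $U^*$ to get $(U^*)^2|S|=|S|$ and unwinds directly, you repackage the same cancellation as the partial-isometry identities $V^2(V^2)^*=V^2$ and $V^2VV^*=VV^*$, forcing $V^2=VV^*$; both versions are sound.
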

  \begin{proof}
  Let us consider     the polar decomposition of $S$ and $S^*$,       $S=U|S|$ and $S^*=U^*|S^*|$  respectively. 
It is  well known that  $|S^*|^p=U|S|^pU^*$ for all $p > 0$. 

Now, since $S$ is quasi-normal,  $U|S|^p=|S|^pU$ and  $U^*|S|^p=|S|^pU^*$ for every  $p>0$. Hence,   we have the following equalities 
 \begin{eqnarray*}
S =  \Delta_{\lambda}(S^*)&=&|S^*|^{\lambda}U^*|S^*|^{1-\lambda}\\&=&U|S|^{\lambda}U^*U^*U|S|^{1-\lambda}U^*\\&=& U(U^*)^2|S|. 
 \end{eqnarray*} 
It follows that 
 \begin{equation}
 U(U^*)^2|S|=U|S|.
 \end{equation}
 After multiplying this equality by $U^*$, we get
  $$(U^*)^2|S|=|S|.$$ 
  Hence 
 $$(U^*)^2|S|=|S|U^2=U|S|U=|S|.$$ Thus 
 $$S= U|S| = |S|U=U^*U|S|U=U^*|S|=|S|U^*=S^*.$$
 Thus $ S = S^*$ and the lemma is proved.
 \end{proof}
  
  Combining  Lemmas \ref{l22} and \ref{l33}, we obtain the following corollary which plays an important role in the proof of $\Phi(I) = I$.
  
   \begin{cor}\label{l3}
  Let $T\in\B(H)$ and $\lambda \in]0,1[$. Suppose that $T$ and $T^*$ are one-to-one. Then,
  $$\Delta_{\lambda}(T^2)=T  \;\;\text{if and only if }\;\; T=I.$$ 
  \end{cor}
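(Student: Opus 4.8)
The plan is to treat this corollary as a direct composition of the two preceding lemmas, so that essentially all the real work has already been done. The forward (``if'') direction is immediate: when $T = I$ we have $T^2 = I$, and since $I$ is quasi-normal, (\ref{qn}) gives $\Delta_{\lambda}(T^2) = \Delta_{\lambda}(I) = I = T$.

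For the reverse (``only if'') direction, I would start from the hypothesis $\Delta_{\lambda}(T^2) = T$ together with the standing assumption that $T$ and $T^*$ are one-to-one. First I would feed this situation directly into Lemma \ref{l33}, whose hypotheses are exactly these, to obtain $T^2 = T^*$. Substituting this identity back into the relation $\Delta_{\lambda}(T^2) = T$ rewrites it as $\Delta_{\lambda}(T^*) = T$.

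Next I would set $S := T^*$. Since $T$ and $T^*$ are both injective, $S$ and $S^*$ are injective as well, and the relation just obtained reads $\Delta_{\lambda}(S) = S^*$. This is precisely the hypothesis of Lemma \ref{l22} (the version assuming $S$ and $S^*$ one-to-one), which therefore yields $S = S^*$, that is, $T = T^*$. Finally, combining $T = T^*$ with the earlier identity $T^2 = T^*$ gives $T^2 = T$, hence $T(I - T) = 0$; injectivity of $T$ then forces $T = I$, completing the argument.

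I do not expect any genuine obstacle here, since the substantive content is carried entirely by Lemmas \ref{l33} and \ref{l22}. The only points requiring care are verifying that the injectivity hypotheses propagate correctly through the substitution $S = T^*$ (so that the ``injective'' version of Lemma \ref{l22} really applies, rather than the quasi-normal version, which we have no way to invoke), and observing at the end that an injective idempotent must be the identity.
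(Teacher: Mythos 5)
Your proof is correct and follows essentially the same route as the paper, which obtains this corollary precisely by ``combining'' Lemma \ref{l33} (yielding $T^2=T^*$) with the injectivity version of the subsequent lemma applied, as you do, to $S=T^*$; your write-up simply makes explicit the routine details the paper leaves to the reader, including the final step that $T=T^*$ and $T^2=T^*$ give $T^2=T$, so injectivity forces $T=I$. Your observation that the injective variant (rather than the quasi-normal variant) of that lemma is the one that applies is also exactly right.
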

  
  \bigskip

  The next lemma,  was established by S. Garcia in the case $\lambda = \frac{1}{2}$, see \cite{gar}. It  identifies the kernel of the $\lambda$-Aluthge transform as the set of all operators which are nilpotent of order two. We need this lemma later.

  \begin{lem}\label{00} 
Let $\lambda \in ]0,1]$ and $T\in \B(H)$. Then 
$$ {\Delta_{ \lambda}}(T)=0 \: \text{ if and only if} \: T^2 = 0.$$
\end{lem}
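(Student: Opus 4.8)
The plan is to translate both the vanishing of $\Delta_{\lambda}(T)$ and the relation $T^2=0$ into statements about the range and kernel of $T$, working entirely from the polar decomposition $T=V|T|$. Writing $\mathcal{N}:=\mathcal{N}(T)$, the elementary facts I will use throughout are: by functional calculus $\mathcal{N}(|T|^s)=\mathcal{N}$ and $\overline{\mathcal{R}(|T|^s)}=\mathcal{N}^{\perp}$ for every $s>0$; the partial isometry $V$ satisfies $\mathcal{N}(V)=\mathcal{N}$, has initial space $\mathcal{N}^{\perp}=\overline{\mathcal{R}(|T|)}$ and final space $\overline{\mathcal{R}(V)}=\overline{\mathcal{R}(T)}$, and $V$ is isometric from its initial space onto its final space. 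Finally, since $\mathcal{N}$ is closed, $T^2=0$ is equivalent to $\mathcal{R}(T)\subseteq\mathcal{N}$, hence to $\overline{\mathcal{R}(T)}\subseteq\mathcal{N}$.

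For the ``if'' direction, assume $T^2=0$, so that $\overline{\mathcal{R}(T)}\subseteq\mathcal{N}=\mathcal{N}(|T|^{\lambda})$. For any $u\in H$ the vector $V|T|^{1-\lambda}u$ lies in $\mathcal{R}(V)\subseteq\overline{\mathcal{R}(T)}\subseteq\mathcal{N}(|T|^{\lambda})$, whence $|T|^{\lambda}V|T|^{1-\lambda}u=0$; thus $\Delta_{\lambda}(T)=0$. At the endpoint one reads $\Delta_{1}(T)=|T|V$, and since $Vu\in\overline{\mathcal{R}(T)}\subseteq\mathcal{N}(|T|)$ the same computation gives $|T|Vu=0$.

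For the ``only if'' direction, assume $\Delta_{\lambda}(T)=|T|^{\lambda}V|T|^{1-\lambda}=0$ and first treat $\lambda\in\,]0,1[$. Because $\mathcal{R}(|T|^{1-\lambda})$ is dense in the initial space $\mathcal{N}^{\perp}$ of $V$ and $V$ is isometric there, the set $\{V|T|^{1-\lambda}u : u\in H\}$ is dense in the final space $\overline{\mathcal{R}(T)}$. The hypothesis says $|T|^{\lambda}$ annihilates this dense set, so by continuity $|T|^{\lambda}$ vanishes on $\overline{\mathcal{R}(T)}$; since $\mathcal{N}(|T|^{\lambda})=\mathcal{N}$ this yields $\overline{\mathcal{R}(T)}\subseteq\mathcal{N}$, i.e.\ $T^2=0$. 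The endpoint $\lambda=1$ is handled directly: $\Delta_{1}(T)=|T|V=0$ forces $\mathcal{R}(V)\subseteq\mathcal{N}(|T|)=\mathcal{N}$, hence again $\overline{\mathcal{R}(T)}\subseteq\mathcal{N}$ and $T^2=0$.

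The only genuinely delicate points are the bookkeeping of the initial and final spaces of the partial isometry $V$ (in particular the fact that $V$ carries a dense subset of its initial space onto a dense subset of $\overline{\mathcal{R}(T)}$, so that the vanishing on a dense set propagates by continuity) and the boundary value $\lambda=1$, where the factor $|T|^{1-\lambda}$ degenerates and the density argument must be replaced by the direct computation above. Everything else reduces to the routine functional-calculus identifications of $\mathcal{N}(|T|^{s})$ and $\overline{\mathcal{R}(|T|^{s})}$ with $\mathcal{N}$ and $\mathcal{N}^{\perp}$.
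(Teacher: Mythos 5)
Your proof is correct, but it takes a genuinely different route from the paper's, which argues purely algebraically with operator identities. For the direction $\Delta_{\lambda}(T)=0 \Rightarrow T^2=0$, the paper uses the one-line factorization $T^2 = V|T|V|T| = V|T|^{1-\lambda}\,\Delta_{\lambda}(T)\,|T|^{\lambda}$, which makes the implication immediate, with no density considerations and no case split at $\lambda=1$ (the identity is insensitive to the endpoint once $|T|^{0}=I$); you replace this by showing that $\{V|T|^{1-\lambda}u : u\in H\}$ is dense in $\overline{\mathcal{R}(T)}$ and propagating the vanishing of $|T|^{\lambda}$ by continuity, which is more work but isolates the geometric content. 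For the converse, the paper left-multiplies $V|T|V|T|=0$ by $V^*$, uses that $V^*V$ is the projection onto $\overline{\mathcal{R}(|T|)}$ to obtain $|T|V|T|=0$, and then concludes $|T|V=0$ because $|T|V$ vanishes both on $\overline{\mathcal{R}(|T|)}$ and on $\mathcal{N}(V)=\mathcal{N}(|T|)$ --- in effect proving the stronger statement that $T^2=0$ forces the Duggal transform $\Delta_{1}(T)=|T|V$ to vanish, whence $\Delta_{\lambda}(T)=|T|^{\lambda}V|T|^{1-\lambda}=0$ for every $\lambda$; your version goes through the inclusion $\overline{\mathcal{R}(T)}\subseteq\mathcal{N}(T)$ and the final-space identification $\mathcal{R}(V)=\overline{\mathcal{R}(T)}$ instead. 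What your route buys is conceptual transparency: both implications reduce to the single $\lambda$-independent criterion $T^2=0 \iff \overline{\mathcal{R}(T)}\subseteq\mathcal{N}(T)$, so the lemma is exhibited as a statement about ranges and kernels. What the paper's route buys is brevity and uniformity in $\lambda\in\,]0,1]$, whereas your density justification (that $\mathcal{R}(|T|^{1-\lambda})$ is dense in the initial space of $V$) degenerates at $\lambda=1$ --- a point you correctly noticed and patched with the direct computation for the Duggal transform.
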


\begin{proof}
 Let $T=V|T|$ be the polar decomposition of $T$. 

If $\;  {\Delta_{ \lambda}}(T) = 0$, then we have
$$T^2=V|T|V|T|=V|T|^{1-\lambda} {\Delta_{ \lambda}}(T)|T|^{\lambda}=0,  \: \text{and thus} \;   T^2 = 0.$$

Conversely, suppose that  $T^2=0$. Then  
$V|T|V|T|= T^2 = 0$.  Hence $V^*V|T|V|T|=0$, which  implies $|T|V|T|=0$,  since $V^*V$ is the projection onto $\overline{{\mathcal{R}(|T|)}}$. 
Whence $|T|V$ vanishes on $\overline{{\mathcal{R}(|T|)}}$. Furthermore $\mathcal{N}(V) = \mathcal{N}(|T|)$, from which one concludes $|T|V$ vanishes on $\mathcal{N}(|T|)$. 
Consequently, $|T|V=0$. Thus  ${\Delta_{ \lambda}}(T) = |T|^{\lambda}V|T|^{1-\lambda} = 0$.
\end{proof}

  \section{ Proof of the main theorem}

   An  idempotent self adjoint  operator $P\in\B(H)$  is said to be an orthogonal projection.  
 Clearly  quasi-normal idempotents  are orthogonal projections. \\
Two  projections $P,Q\in\B(H)$ are said to be orthogonal if 
$$PQ=QP=0,$$
 in this case, we denote $P\perp Q$. 
  A partial ordering between orthogonal projections is defined as follows,  
  $$Q\leq P\:  \;  \mbox{  if  } \:  \; PQ = QP = Q.$$

\begin{rem}  We have
$$ P\perp Q \: \iff \: P+Q \; \text{orthogonal projection},$$
and
   $$ Q \leq P \:  \iff PQ + QP = 2Q \: \iff P\circ Q = Q .$$
   \end{rem}

 \bigskip
  
\begin{pro}\label{cor1} Let $\Phi:\B(H)\to\B(K)$ be a bijective map satisfying  (\ref{c1}). Then
 $$\Phi(0)=0 \quad \text{ and} \quad  \Phi(I)=I.$$
\end{pro}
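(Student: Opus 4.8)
The plan is to read off two scalar-free identities from Condition (\ref{c1}) by well-chosen substitutions, and then hand them to the machinery already assembled (Lemma \ref{00} and Corollary \ref{l3}).

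For $\Phi(0)=0$ I would use surjectivity first. Pick $C$ with $\Phi(C)=0$. Substituting $A=C$ and an arbitrary $B$ into (\ref{c1}) makes the left-hand side $\Delta_{\lambda}(\Phi(C)\circ\Phi(B))=\Delta_{\lambda}(0)=0$, so $\Phi(\Delta_{\lambda}(C\circ B))=0=\Phi(C)$; injectivity of $\Phi$ forces $\Delta_{\lambda}(C\circ B)=C$ for every $B$. Choosing $B=0$, so that $C\circ 0=0$, gives $C=\Delta_{\lambda}(0)=0$, i.e. $\Phi(0)=0$. This part is short and uses both halves of bijectivity.

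For $\Phi(I)=I$, write $T=\Phi(I)$. Putting $A=B=I$ in (\ref{c1}) and using $I\circ I=I$ with $\Delta_{\lambda}(I)=I$ (by (\ref{qn})) yields $\Delta_{\lambda}(T^{2})=T$. By Corollary \ref{l3} it then suffices to show that $T$ and $T^{*}$ are one-to-one, after which $T=I$ is immediate. I expect this injectivity to be the main obstacle: the bare relation $\Delta_{\lambda}(T^{2})=T$ does \emph{not} imply injectivity, since every orthogonal projection satisfies it, so the argument must genuinely use that $\Phi$ is a bijection and not merely the identity $\Delta_{\lambda}(T^{2})=T$.

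To prove $T$ is one-to-one I would argue by contradiction using rank-one test operators. Suppose $Tv=0$ with $\|v\|=1$ and set $Q=v\otimes v$. A direct computation gives $Q\circ T=\tfrac12\,v\otimes T^{*}v$, hence $(Q\circ T)^{2}=\tfrac14\langle Tv,v\rangle\,v\otimes T^{*}v=0$. Now let $A=\Phi^{-1}(Q)$ and substitute this $A$ with $B=I$ into (\ref{c1}): the left side becomes $\Delta_{\lambda}(Q\circ T)$, which vanishes by Lemma \ref{00} because $(Q\circ T)^{2}=0$, so $\Phi(\Delta_{\lambda}(A))=0$; injectivity of $\Phi$ with $\Phi(0)=0$ gives $\Delta_{\lambda}(A)=0$, and Lemma \ref{00} again yields $A^{2}=0$. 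Finally, substituting $B=A$ into (\ref{c1}) gives $\Delta_{\lambda}(\Phi(A)^{2})=\Phi(\Delta_{\lambda}(A^{2}))=\Phi(0)=0$, while $\Phi(A)^{2}=Q^{2}=Q$ and $\Delta_{\lambda}(Q)=Q$ (a projection is quasi-normal), so $Q=0$, a contradiction. The symmetric choice, $T^{*}u=0$ with $Q=u\otimes u$ (now $Q\circ T=\tfrac12\,Tu\otimes u$ and $\langle Tu,u\rangle=0$), shows $T^{*}$ is one-to-one as well. Corollary \ref{l3} then gives $T=\Phi(I)=I$, completing the proof.
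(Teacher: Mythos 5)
Your proposal is correct and follows essentially the same route as the paper: a preimage argument for $\Phi(0)=0$, then rank-one test projections combined with Lemma \ref{00} (applied twice) and the bijectivity of $\Phi$ to establish that $T=\Phi(I)$ and $T^*$ are one-to-one, and finally the relation $\Delta_{\lambda}(T^2)=T$ fed into Corollary \ref{l3}. The only cosmetic difference is that you obtain $\Phi(0)=0$ via injectivity of $\Phi$ and the substitution $B=0$, whereas the paper reads it off in one line from $\Phi(0)=\Delta_{\lambda}(\Phi(0)\circ\Phi(A))=\Delta_{\lambda}(0)=0$ using surjectivity alone.
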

 \begin{proof}
  Since $\Phi$ is onto, let $A\in \B(H)$ such that $\Phi(A)=0$. By  (\ref{c1}) we have  
 $$\Phi(0)=\Delta_{\lambda}(\Phi(0)\circ \Phi(A)) = \Delta_{\lambda}(0) = 0.$$
 This proves $\Phi(0) = 0$.
 
 New, we prove that $\Phi(I) = I$. For the sake of simplicity,   write  $T=\Phi(I)$. First, we show that $T$ and $T^*$ are injective. Indeed, let $y\in K$ such that $Ty=0$. Since $\Phi$ is onto, there exists $B\in\B(H)$ such that $\Phi(B)=y\otimes y$. By  (\ref{c1}) we get 
  $$\Delta_{\lambda}((y\otimes y)\circ T) =  \Delta_{\lambda}(\Phi(B)\circ T)=\Delta_{\lambda}(\Phi(B)\circ \Phi(I))=\Phi(\Delta_{\lambda}(B)).$$
 This can be rewritten in the other therms,
  $$\frac{1}{2}\Delta_{\lambda}(Ty\otimes y+y\otimes T^*y)= \frac{1}{2}\Delta_{\lambda}(y\otimes T^*y)=\Phi(\Delta_{\lambda}(B)).$$ 
 In the other hand  $(y\otimes T^*y)^2=<Ty,y>y\otimes T^*y=0$, since $Ty = 0$. By Lemma \ref{00},  $\Delta_{\lambda} (y\otimes T^*y)=0$ and thus $\Phi(\Delta_{\lambda}(B))=0$. Therefore $\Delta_{\lambda}(B)=0$, because $\Phi$ is bijective  and $\Phi(0) = 0$. Again, by  Lemma \ref{00}, $B^2=0$. 
 
 Now, using  Condition (\ref{c1}),  we get
  $$\|y\|^2 y\otimes y=\Phi(B)^2=\Delta_{\lambda}(\Phi(B)\circ \Phi(B))=\Phi(\Delta_{\lambda}(B^2))=0,$$
   which implies that $y=0$ and whence $T$ is injective. With a similar argument we prove that $T^*$ is  injective. 
 
Again from (\ref{c1}), we get 
$$\Delta_{\lambda}(T^2)=\Delta_{\lambda}(\Phi(I)\circ \Phi(I))=\Phi(\Delta_{\lambda}(I))=\Phi(I)=T.$$
From Corollary \ref{l3}, we deduce that $T=\Phi(I)=I$, and completes the proof.
\end{proof}

 \begin{thm}\label{cor2} Let $\Phi:\B(H)\to\B(K)$ be a bijective map satisfying  (\ref{c1}). Then
\begin{enumerate}[(i)]
\item$\Delta_{\lambda}(\Phi(A))=\Phi(\Delta_{\lambda}(A))$, for all $A\in\B(H)$. In particular $\Phi$ preserves the set of quasi-normal operators in  both directions. 
\item $\Phi (A^2)=(\Phi (A))^2$ for all  $A$ quasi-normal.
\item $\Phi $ preserves the set of  orthogonal projections.
\item $\Phi $ preserves the orthogonality between the projections ; $$P\perp Q \Leftrightarrow \Phi (P)\perp\Phi (Q).$$
\item $\Phi $ preserves the order relation on the set of orthogonal projections in  both directions ; 
 $$Q\leq P \Leftrightarrow \Phi (Q) \leq \Phi  (P).$$
\item  $\Phi (P+Q)=\Phi (P)+\Phi (Q)$ for all orthogonal projections $P,Q$ such that $P\perp Q$. 
\item  $\Phi $ preserves the set of  rank one  orthogonal projections in  both directions.
\end{enumerate}
 \end{thm}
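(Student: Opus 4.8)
The plan is to drive everything from the functional equation (\ref{c1}) by well-chosen substitutions, using throughout that $\Phi^{-1}$ again satisfies (\ref{c1}): writing $C=\Phi(A)$, $D=\Phi(B)$ and applying $\Phi^{-1}$ to (\ref{c1}) gives $\Delta_{\lambda}(\Phi^{-1}(C)\circ\Phi^{-1}(D))=\Phi^{-1}(\Delta_{\lambda}(C\circ D))$. Since moreover $\Phi^{-1}(0)=0$ and $\Phi^{-1}(I)=I$ by Proposition \ref{cor1}, every statement proved for $\Phi$ applies verbatim to $\Phi^{-1}$, which is exactly what furnishes the reverse implications in the ``both directions'' assertions of (i), (iii), (iv), (v), (vii).

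For (i) I would set $B=I$ in (\ref{c1}); as $A\circ I=A$ and $\Phi(I)=I$, the equation collapses to $\Delta_{\lambda}(\Phi(A))=\Phi(\Delta_{\lambda}(A))$. The quasi-normal claim is then the fixed-point characterization (\ref{qn}): if $\Delta_{\lambda}(A)=A$ then $\Delta_{\lambda}(\Phi(A))=\Phi(A)$, and conversely by injectivity. For (ii), taking $B=A$ with $A$ quasi-normal gives $\Delta_{\lambda}(\Phi(A)^{2})=\Phi(\Delta_{\lambda}(A^{2}))$; since the square of a quasi-normal operator is quasi-normal, $\Delta_{\lambda}(A^{2})=A^{2}$, while $\Phi(A)$ is quasi-normal by (i) so $\Delta_{\lambda}(\Phi(A)^{2})=\Phi(A)^{2}$, and the two combine to $\Phi(A^{2})=\Phi(A)^{2}$. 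Part (iii) is then immediate: an orthogonal projection $P$ is quasi-normal, so $\Phi(P)$ is quasi-normal by (i) and idempotent by (ii) (because $P^{2}=P$), hence an orthogonal projection since quasi-normal idempotents are orthogonal projections.

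The crucial device for (iv) and (v) is that for orthogonal projections $P',Q'$ the Jordan product $P'\circ Q'$ is self-adjoint, hence normal, hence quasi-normal, hence fixed by $\Delta_{\lambda}$ by (\ref{qn}). For (iv), if $P\perp Q$ then $P\circ Q=0$, so (\ref{c1}) yields $\Delta_{\lambda}(\Phi(P)\circ\Phi(Q))=\Phi(0)=0$; Lemma \ref{00} forces $(\Phi(P)\circ\Phi(Q))^{2}=0$, and as $\Phi(P)\circ\Phi(Q)$ is self-adjoint we get $\|\Phi(P)\circ\Phi(Q)\|^{2}=\|(\Phi(P)\circ\Phi(Q))^{2}\|=0$, so $\Phi(P)\perp\Phi(Q)$. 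For (v), if $Q\leq P$ then $P\circ Q=Q$ and (\ref{c1}) gives $\Delta_{\lambda}(\Phi(P)\circ\Phi(Q))=\Phi(Q)$; but $\Phi(P)\circ\Phi(Q)$ is self-adjoint, hence equal to its own $\lambda$-Aluthge transform, so $\Phi(P)\circ\Phi(Q)=\Phi(Q)$, i.e. $\Phi(Q)\leq\Phi(P)$. Both converses come from the $\Phi^{-1}$ symmetry.

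Finally, (vi) and (vii) are order-theoretic. For (vi), set $R=P+Q$ (a projection as $P\perp Q$) and $S=\Phi^{-1}(\Phi(P)+\Phi(Q))$, which is an orthogonal projection because $\Phi(P)\perp\Phi(Q)$ makes $\Phi(P)+\Phi(Q)$ a projection and $\Phi^{-1}$ preserves projections. From $\Phi(P),\Phi(Q)\leq\Phi(P)+\Phi(Q)=\Phi(S)\leq\Phi(R)$ and the order-preservation of $\Phi^{-1}$ I obtain $P,Q\leq S\leq R$; since $P\perp Q$ this gives $R=P+Q\leq S\leq R$, hence $S=R$ and $\Phi(P)+\Phi(Q)=\Phi(R)=\Phi(P+Q)$. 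For (vii), I would characterize rank one projections as the atoms of the projection lattice (the minimal nonzero orthogonal projections); since $\Phi$ and $\Phi^{-1}$ preserve projections, the order, and $0$, they preserve atoms, which is precisely (vii). I expect (vi) to be the main obstacle, being the one step that cannot be reduced to a direct substitution into (\ref{c1}) and instead requires genuine use of bijectivity together with the interaction of orthogonality and order.
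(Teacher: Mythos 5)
Your proposal is correct and follows essentially the same route as the paper's proof: (i) by taking $B=I$ and using $\Phi(I)=I$, (ii) by taking $B=A$ together with quasi-normality of squares of quasi-normal operators, (iii) as a consequence of (i)--(ii), (iv)--(v) by exploiting that the Jordan product of two orthogonal projections is self-adjoint hence fixed by $\Delta_{\lambda}$, (vi) by the order sandwich $P+Q\leq S\leq P+Q$ using $\Phi^{-1}$, and (vii) by minimality of rank-one projections, with all reverse implications obtained, exactly as in the paper, from the observation that $\Phi^{-1}$ again satisfies (\ref{c1}). The only micro-variations are harmless: in (iv) you detour through Lemma \ref{00} and the $C^*$-identity $\Vert T\Vert^2=\Vert T^2\Vert$ where the paper concludes $\Phi(P)\circ\Phi(Q)=0$ directly from quasi-normality, and you leave implicit the routine last step that $\Phi(P)\Phi(Q)+\Phi(Q)\Phi(P)=0$ for projections forces $\Phi(P)\Phi(Q)=\Phi(Q)\Phi(P)=0$, which the paper spells out by multiplying the anticommutation relation by $\Phi(P)$ on both sides.
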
 
 \begin{proof}
 (i)  Since, from Proposition \ref{cor1}, $\Phi(I) = I$,  for  $B=I$ in (\ref{c1})  we obtain  $\Delta_{\lambda}(\Phi(A))=\Phi(\Delta_{\lambda}(A))$, for all $A\in\B(H)$. 
  Now, by  $(2.1)$ we get that  
$\Phi$ preserves the set of quasi-normal operators in  both directions. 
 
 (ii)  Let $A\in \B(H)$ be a quasi-normal operator. Since $\Phi$ preserves the set of quasi-normal operators,   
  $\Phi(A), \Phi(A^2), (\Phi(A))^2$  are quasi-normal. By (\ref{c1}), we get  $\Delta_{\lambda}((\Phi(A))^2)=\Phi(\Delta_{\lambda} (A^2))$. 
  Hence  $(\Phi(A))^2=\Phi(A^2)$ follows from $(2.1)$.

 (iii) It is an immediate consequence of (i) and (ii).
 
 \bigskip

 In the rest of the proof $P, Q$ are two orthogonal  projections.
 
 (iv) Assume that $P,Q$ are orthogonal ($P\perp Q)$. Since $\Phi$ preserves the set of orthogonal  projections,  $\Phi(P)\circ \Phi(Q)$ is self-adjoint operator. 
 Hence by (\ref{c1}) and $(2.1)$, we get  
 $$\Phi(P)\circ \Phi(Q)=\Delta_{\lambda}(\Phi(P)\circ\Phi(Q))=\Phi(\Delta_{\lambda}(P\circ Q))=0.$$
 Thus  
 \begin{equation}\label{eq7}
 \Phi(P)\Phi(Q)+\Phi(Q)\Phi(P)=0
 \end{equation}
 Multiplying  (\ref{eq7}) by $\Phi(P)$ on  both sides,  we get 
 \begin{equation}\label{eq8}
 \Phi(P)\Phi(Q)+\Phi(P)\Phi(Q)\Phi(P)=0 \;\;\text{ and }\;\;
\Phi(P)\Phi(Q)\Phi(P)+\Phi(Q)\Phi(P)=0.
  \end{equation}
 It follows that $\: \Phi(P)\Phi(Q)=\Phi(Q)\Phi(P)=0$.
 
 (v) Suppose that $Q \leq P$.  Using Remark 3.1 and   (\ref{c1}), we get 

$$\Phi(P)\circ \Phi(Q) = \Delta_{\lambda}(\Phi(P)\circ \Phi(Q)) = \Phi(\Delta_{\lambda}(P\circ Q)) = \Phi(Q).$$
Hence,
 $$\Phi(Q)\leq \Phi(P).$$
  (vi) Since  $P\perp Q$,  $P+Q$ is an orthogonal  projection and $P,Q\leq P+Q$. By (v),   $\Phi(P),\Phi(Q) \leq \Phi(P+Q)$. Thus $\Phi(P)+\Phi(Q)\leq \Phi(P+Q)$. 
  Since $\Phi^{-1}$ satisfies the same assumptions as $\Phi$, it follows that $\Phi(P)+\Phi(Q)=\Phi(P+Q)$.
 
 (vii)  Let $P=x\otimes x$ be a rank one projection. Then $\Phi(P)$ is a non zero projection. Let $y\in K$ be an unit vector such that 
 $y\otimes y \leq \Phi(P)$.  Thus $\Phi^{-1}(y\otimes y) \leq P$.  Since $P$ is a minimal projection and $\Phi^{-1}(y\otimes y)$ is a non zero projection, 
  $\Phi^{-1}(y\otimes y)= P$. Therefore  $\Phi(P)=y\otimes y$ is a rank one projection. 
 \end{proof}
 
\bigskip 

 In the following proposition we introduce a function  $h:\C\to\C$, which will be used later to prove the linearity of $\Phi$.

 \begin{pro}\label{cor3}   Let $\Phi:\B(H)\to\B(K)$ be a bijective map satisfying  (\ref{c1}). Then there exists a bijective function $h:\C\to\C$ such that 
$$h(0)=0\;\; \text{and}\;\;h(1)=1, $$
 and it  satisfies the following properties : 
 \begin{enumerate}[(i)]
 \item $\Phi(\alpha I)=h(\alpha)I$ for all $\alpha \in \C$.
 \item $h(\alpha\beta)=h(\alpha)h(\beta)$ for all $\alpha, \beta \in \C$.
 \item $h(-\alpha)=-h(\alpha)$ for all $\alpha \in \C$.
 \item $\Phi(\alpha A)=h(\alpha)\Phi(A)$ for all $A$ quasi-normal and $\alpha \in \C$.
 \end{enumerate}
 \end{pro}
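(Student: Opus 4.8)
The plan is to construct $h$ out of property (i) and then obtain (ii)--(iv) and the bijectivity almost formally, the whole argument resting on one elementary observation: $\Delta_{\lambda}$ is homogeneous, i.e. $\Delta_{\lambda}(\mu T)=\mu\,\Delta_{\lambda}(T)$ for every $\mu\in\C$ and $T\in\B(H)$ (for $\mu\neq0$ write $\mu T=\tfrac{\mu}{|\mu|}V\cdot|\mu||T|$ in polar form). I shall also use repeatedly that scalar operators, and more generally scalar multiples of projections, are quasi-normal, hence fixed by $\Delta_{\lambda}$ by (\ref{qn}).

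The crux is (i): showing that $\Phi(\alpha I)$ is genuinely a scalar operator. Fix $\alpha\neq0$ and an arbitrary rank one projection $P=x\otimes x$, and put $Q=\Phi(P)$, which is again a rank one projection $y\otimes y$ by Theorem \ref{cor2}(vii). First I feed $A=\alpha P$, $B=P$ into (\ref{c1}); since $(\alpha P)\circ P=\alpha P$ and $\Delta_{\lambda}(\alpha P)=\alpha P$, this reads $\Delta_{\lambda}(\Phi(\alpha P)\circ Q)=\Phi(\alpha P)$, so Proposition \ref{l2} forces $\Phi(\alpha P)=c\,Q$ for some scalar $c=c(\alpha,P)$, with $c\neq0$ because $\alpha P\neq0$ and $\Phi$ is injective with $\Phi(0)=0$. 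Next I feed $A=\alpha I$, $B=P$ into (\ref{c1}); since $(\alpha I)\circ P=\alpha P$ this gives $\Delta_{\lambda}(\Phi(\alpha I)\circ Q)=\Phi(\alpha P)=cQ$. Writing $T=\Phi(\alpha I)$ and using homogeneity, $\Delta_{\lambda}\bigl((\tfrac1cT)\circ Q\bigr)=Q$, so Proposition \ref{l1} yields $Q(\tfrac1cT)=Q$, that is $T^{*}y=\bar c\,y$. As $P$ runs over all rank one projections of $H$, $Q=y\otimes y$ runs over all rank one projections of $K$ (bijectivity of $\Phi$ together with Theorem \ref{cor2}(vii)), so every vector of $K$ is an eigenvector of $T^{*}$; hence $T^{*}$, and therefore $T=\Phi(\alpha I)$, is a scalar. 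This defines $h(\alpha)$ by $\Phi(\alpha I)=h(\alpha)I$ (with $h(0)=0$), and comparison identifies $c=h(\alpha)$, so in passing I also record the rank one instance $\Phi(\alpha P)=h(\alpha)\Phi(P)$. Finally $\Phi(I)=I$ from Proposition \ref{cor1} gives $h(1)=1$.

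With (i) in hand the remaining items are short. For (ii) I put $A=\alpha I$, $B=\beta I$ in (\ref{c1}): the left side is $\Delta_{\lambda}\bigl(h(\alpha)h(\beta)I\bigr)=h(\alpha)h(\beta)I$ and the right side is $\Phi(\alpha\beta I)=h(\alpha\beta)I$, whence $h(\alpha\beta)=h(\alpha)h(\beta)$. For (iii) it suffices to show $h(-1)=-1$: from (ii), $h(-1)^{2}=h(1)=1$, so $h(-1)=\pm1$, and $h(-1)=1$ is impossible since it would give $\Phi(-I)=I=\Phi(I)$ against injectivity; then $h(-\alpha)=h(-1)h(\alpha)=-h(\alpha)$. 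For (iv), given a quasi-normal $A$ and $\alpha\neq0$, I take first argument $\alpha I$ and second argument $A$ in (\ref{c1}): since $A$ is quasi-normal, $\Delta_{\lambda}(\Phi(A))=\Phi(\Delta_{\lambda}(A))=\Phi(A)$, so by homogeneity the left side equals $h(\alpha)\Phi(A)$; and since $\alpha A$ is again quasi-normal, $\Delta_{\lambda}(\alpha A)=\alpha A$ makes the right side $\Phi(\alpha A)$, giving $\Phi(\alpha A)=h(\alpha)\Phi(A)$. Bijectivity of $h$ follows because $\Phi^{-1}$ also satisfies (\ref{c1}), so applying (i) to $\Phi^{-1}$ produces a two-sided inverse of $h$ (injectivity being in any case immediate from that of $\Phi$).

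The main obstacle is exactly step (i): the hypothesis only controls $\Delta_{\lambda}$ of Jordan products, and $\Delta_{\lambda}$ is badly nonlinear, so one cannot conclude directly that $\Phi(\alpha I)$ is scalar. The device that unlocks it is to combine the homogeneity of $\Delta_{\lambda}$ with Propositions \ref{l1} and \ref{l2}, which between them translate the two fixed-point relations $\Delta_{\lambda}(S\circ Q)=S$ and $\Delta_{\lambda}(S\circ Q)=Q$ into, respectively, the proportionality $S=c\,Q$ and the eigenvector equation $S^{*}y=\bar c\,y$; turning the analytic constraint into the purely algebraic statement that every vector of $K$ is an eigenvector of $\Phi(\alpha I)^{*}$ is what makes $\Phi(\alpha I)$ scalar.
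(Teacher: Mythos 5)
Your proof is correct and takes essentially the same route as the paper's: the same two applications of (\ref{c1}) (with $A=\alpha P$, $B=P$ to invoke Proposition \ref{l2}, then $A=\alpha I$, $B=P$ to invoke Proposition \ref{l1}), the same eigenvector argument showing $\Phi(\alpha I)^*$ is scalar since every unit vector of $K$ arises from a rank one projection, and the same derivations of (ii)--(iv) and of bijectivity via $\Phi^{-1}$. The only difference is presentational: you isolate the homogeneity $\Delta_{\lambda}(\mu T)=\mu\,\Delta_{\lambda}(T)$ as an explicit tool, which the paper uses tacitly when normalizing by $h_P(\alpha)$ before applying Proposition \ref{l1} and again in item (iv).
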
   
  \begin{proof}
  First, we show that there exists a function 
  $$h : \C \to  \C \: \text{such that}  \: \Phi(\alpha I)=h(\alpha)I\: \text{ for all} \: \alpha \in\C.$$
  
 When  $\alpha\in\{0,1\}$ the function $h$ is defined by $h(\alpha)=\alpha$ since $\Phi(\alpha I)=\alpha I$.
 
  Now, we suppose that $\alpha \ne 0$. Let  $y\in K$ an arbitrary unit vector.  By Theorem 3.1 (iii), there exists $P = x\otimes x \in \B(H)$ with $\Vert x\Vert = 1$,  such that  $\Phi(P)=y \otimes y$. \\
   By (\ref{c1}) with $A=P$ and $B=\alpha P$ we get 
  $$\Delta_{\lambda}\Big( \Phi(\alpha P) \circ \Phi(P)\Big) =\Phi(\Delta_{\lambda}(\alpha P))=\Phi(\alpha P).$$
 From  Proposition \ref{l2}, there exists $h_P(\alpha) \in \C$ such that 
 $$\Phi(\alpha P)=h_P (\alpha) P.$$
 Since $\Phi$ is bijective and $\alpha \not=0$,   $h_P(\alpha)\not=0$.
 
 Again using  (\ref{c1}) with $A=\alpha I$ and $B=P$, we get 
 $$\Delta_{\lambda}\Big(\Phi(\alpha I) \circ \Phi(P)\Big)=\Phi(\alpha P)=h_p(\alpha) \Phi(P).$$
 From Proposition \ref{l1},  we get that
  \begin{eqnarray*}
 \Phi(P)\Phi(\alpha I) = h_P(\alpha)\Phi(P) = y\otimes \Phi(\alpha I)^*y=h_P(\alpha)y\otimes y
  \end{eqnarray*}
 Then  
  $\Phi(\alpha I)^* y=\overline{h_P(\alpha)}y$ for every $y\in K$.
  It follows that $\Phi(\alpha I)=h_P(\alpha)I$. Thus $h := h_P$ does not depend on $P$. 
  
  On the other hand, since $\Phi$ is bijective and $\Phi^{-1}$ satisfies the same properties as $\Phi$,  we conclude that  the function $h:\C\to\C$ is well defined,  bijective   and   $\Phi(\alpha I)=h(\alpha)I$. 
 
(ii)  Let $\alpha, \beta \in \C$,  taking  $A=\alpha I$ and $B=\beta I$ in (\ref{c1}),  we get
 $$h(\alpha)h(\beta)I=\Delta_{\lambda}(\Phi(\alpha I)\circ \Phi(\beta I))=\Phi(\Delta_{\lambda}(\alpha I\circ \beta I)=h(\alpha\beta) I.$$
 Therefore $h$ satisfies the point (ii). 
 
 From (ii),  we have $(h(-1))^2=1$.  Hence $h(-1)=-1$,  since $h(1)=1$ and $h$ is bijective. Hence  
 $$h(-\alpha)=h(-1)h(\alpha)=-h(\alpha) \: \text{ for all} \: \alpha\in\C.$$  
 Thus (iii) is satisfied.

 To prove (iv), let us consider $A$ quasi-normal and $\alpha \in \C$. By (\ref{c1}), we have $$h(\alpha)\Phi(A)=\Delta_{\lambda}(h(\alpha)\Phi(A))=\Delta_{\lambda}(\Phi(A)\circ \Phi(\alpha I))=\Phi(\Delta_{\lambda}(\alpha A))=\Phi(\alpha A).$$
 Thus  (iv) holds. 
 \end{proof}

  \begin{lem}\label{l4}
 Let $P=x\otimes x $, $P'=x'\otimes x'$ be two rank one orthogonal projections such that $P\perp P'$ (that is $<x,x'> = 0$). Then 
$$\Phi(\alpha P+\beta P')=h(\alpha)\Phi(P)+h(\beta)\Phi(P'),  \: \text{for every}  \:   \alpha ,\beta \in \C$$
 \end{lem}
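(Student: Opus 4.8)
The plan is to fix $S=\alpha P+\beta P'$ and to identify $B:=\Phi(S)$ explicitly. Note first that $S$ is normal, hence quasi-normal, so $\Delta_\lambda(S)=S$, and by Proposition \ref{cor3}(iv) we have $\Phi(\gamma S)=h(\gamma)\Phi(S)$ for every $\gamma$. If $\alpha=0$ (or $\beta=0$) the claim reduces at once to Proposition \ref{cor3}(iv) applied to the quasi-normal operator $\beta P'$ (resp. $\alpha P$), using $h(0)=0$; so I may assume $\alpha,\beta\neq 0$, whence $h(\alpha),h(\beta)\neq 0$ since $h$ is a bijection fixing $0$. By Theorem \ref{cor2}(vii) and (iv), $\Phi(P)=Q=y\otimes y$ and $\Phi(P')=Q'=y'\otimes y'$ are rank one projections with $\langle y,y'\rangle=0$; write $W=\mathrm{span}\{y,y'\}$ and $E=Q+Q'$, the orthogonal projection onto $W$. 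The goal becomes $B=h(\alpha)Q+h(\beta)Q'$.

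First I would extract eigen-data for $B^*$. Since $SP=PS=\alpha P$, one has $S\circ P=\alpha P$, so condition (\ref{c1}) together with $\Delta_\lambda(\alpha P)=\alpha P$ and $\Phi(\alpha P)=h(\alpha)Q$ gives $\Delta_\lambda(B\circ Q)=h(\alpha)Q$. Because $\Delta_\lambda$ is homogeneous ($\Delta_\lambda(\mu T)=\mu\Delta_\lambda(T)$, immediate from $|\mu T|=|\mu|\,|T|$), this reads $\Delta_\lambda\big((h(\alpha)^{-1}B)\circ Q\big)=Q$, and Proposition \ref{l1} then yields $(h(\alpha)^{-1}B)^*y=y$, that is $B^*y=\overline{h(\alpha)}\,y$. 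The symmetric computation with $P'$ gives $B^*y'=\overline{h(\beta)}\,y'$.

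Next I would confine the action of $B$ to $W$. By Theorem \ref{cor2}(vi), $\Phi(P+P')=E$, and since $S\circ(P+P')=S$ and $\Delta_\lambda(S)=S$, condition (\ref{c1}) gives $\Delta_\lambda(B\circ E)=B$. Put $T=B\circ E=\tfrac12(BE+EB)$. The two eigen-relations say exactly that $B^*$ maps $W$ into $W$; hence for every $u$ both summands of $T^*u=\tfrac12(EB^*u+B^*Eu)$ lie in $W$ (the first because $\mathcal{R}(E)=W$, the second because $Eu\in W$ and $B^*W\subseteq W$), so $\mathcal{R}(T^*)\subseteq W$, and $\overline{\mathcal{R}(T^*)}\subseteq W$ as $W$ is closed. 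Now the standard range inclusions for the Aluthge transform, $\mathcal{R}(\Delta_\lambda(T))\subseteq\overline{\mathcal{R}(T^*)}$ and $\mathcal{R}(\Delta_\lambda(T)^*)\subseteq\overline{\mathcal{R}(T^*)}$ (already used in Proposition \ref{l2}), applied to $B=\Delta_\lambda(T)$, force $\mathcal{R}(B)\subseteq W$ and $\mathcal{R}(B^*)\subseteq W$, i.e. $B=EBE$. I expect this confinement step to be the main obstacle: it is the rank-two analogue of Proposition \ref{l2}, and the point is to break the apparent circularity (the range of $B^*$ feeds into $\mathcal{R}(T^*)$) by first knowing that $B^*$ already stabilises $W$.

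Finally, since $B=EBE$, the operator $B$ is determined by its restriction $B|_W$, whose Hilbert-space adjoint on $W$ is $B^*|_W$. In the orthonormal basis $\{y,y'\}$ of $W$ the relations $B^*y=\overline{h(\alpha)}y$, $B^*y'=\overline{h(\beta)}y'$ say that $B^*|_W=\mathrm{diag}(\overline{h(\alpha)},\overline{h(\beta)})$, so $B|_W=\mathrm{diag}(h(\alpha),h(\beta))$ by taking the conjugate transpose. Hence $B=h(\alpha)(y\otimes y)+h(\beta)(y'\otimes y')=h(\alpha)\Phi(P)+h(\beta)\Phi(P')$, which is the asserted identity.
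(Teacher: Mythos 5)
Your proposal is correct and follows essentially the same route as the paper: both first derive the eigen-relations $B^*y=\overline{h(\alpha)}y$, $B^*y'=\overline{h(\beta)}y'$ from $S\circ P=\alpha P$ via Proposition \ref{l1} (with the homogeneity of $\Delta_{\lambda}$), then exploit $S\circ(P+P')=S$ together with $\Phi(P+P')=\Phi(P)+\Phi(P')$ (Theorem \ref{cor2}(vi)) and the range inclusions $\mathcal{R}(\Delta_{\lambda}(T)),\mathcal{R}(\Delta_{\lambda}(T)^*)\subseteq\overline{\mathcal{R}(T^*)}$ to confine $B$ to $\mathrm{span}\{y,y'\}$ and read off its diagonal form. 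The only difference is cosmetic: where you argue abstractly that $\mathcal{R}((B\circ E)^*)\subseteq W$ from $B^*W\subseteq W$, the paper writes $\Phi(S)\circ\Phi(P)$ and $\Phi(S)\circ\Phi(P')$ out as explicit rank-one operators before applying the same range inclusion.
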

 \begin{proof} If $\alpha =0$ or $\beta=0$ the result follows from preceding Proposition. 
 
 Suppose that $\alpha\ne 0$ and $\beta \ne 0$.   Note that $(\alpha P+\beta P')\circ (P+P')=\alpha P+\beta P'$. \\
 By  (\ref{c1}),  applied to  $B=\alpha P+\beta P'$ and $A=P+P'$, we obtain
 \begin{eqnarray*}
\Phi(\alpha P+\beta P')&=&\Phi(\Delta_{\lambda}(\alpha P+\beta P'))\\&=&\Phi(\Delta_{\lambda}((\alpha P+\beta P')\circ(P+P')))\\&=&\Delta_{\lambda}(\Phi(\alpha P+\beta P')\circ\Phi(P+P'))\\&=&\Delta_{\lambda}(\Phi(\alpha P+\beta P')\circ(\Phi(P)+\Phi(P'))).
\end{eqnarray*}  
It follows that 
\begin{equation}\label{eq9'}
\Phi(\alpha P+\beta P')=\Delta_{\lambda}\Big(\Phi(\alpha P+\beta P')\circ\Phi(P)+\Phi(\alpha P+\beta P')\circ\Phi(P')\Big)
\end{equation}
Again by  (\ref{c1}) with $B=\alpha P+\beta P'$ and $ A=P$,  we get that
\begin{eqnarray*}
\Delta_{\lambda}\Big(\Phi(\alpha P+\beta P')\circ\Phi(P)\Big)&=&\Delta_{\lambda}(\Phi((\alpha P+\beta P')\circ P)))\\
&=&\Phi(\Delta_{\lambda}(\alpha P)) \quad \quad \text{(since $(\alpha P+\beta P')\circ P=\alpha P$)}\\
&=&\Phi(\alpha P)=h(\alpha)\Phi(P). 
  \end{eqnarray*} 
  By Proposition \ref{l1} and  $h(\alpha)\ne 0$, it follows that 
  \begin{equation}\label{eq10}
 \Phi(P)\Phi(\alpha P+\beta P')=h(\alpha)\Phi(P).
  \end{equation}
  In the same manner, we also have 
   \begin{equation}\label{eq11}
 \Phi(P')\Phi(\alpha P+\beta P')=h(\beta)\Phi(P').
  \end{equation}
  Now, we denote by $T=\Phi(\alpha P+\beta P')$, $\Phi(P)=y\otimes y$ and $\Phi(P')=y'\otimes y'$. \\
  Since $P$ and $P'$ are orthogonal, then $y$ and $y'$ are orthogonal too.\\
    By (\ref{eq10}) and (\ref{eq11}), we get that 
  $$T^*y=\overline{h(\alpha)} y \quad \text{and} \quad T^*y'=\overline{h(\beta)}y',$$
   and also
  \begin{equation}\label{eq12}
\Phi(\alpha P+\beta P')\circ\Phi(P)=\frac{1}{2}(h(\alpha)\Phi(P)+\Phi(\alpha P+\beta P')\Phi(P))=\frac{1}{2}\big((h(\alpha)y+Ty)\otimes y\big), 
  \end{equation}
  and
   \begin{equation}\label{eq13}
   \Phi(\alpha P+\beta P')\circ\Phi(P')=\frac{1}{2}(h(\beta)\Phi(P')+\Phi(\alpha P+\beta P')\Phi(P'))=\frac{1}{2}\big((h(\alpha)y'+Ty')\otimes y'\big).
     \end{equation}
 By (\ref{eq13}), it follows that 
  \begin{equation}\label{eq14}
  T=\frac{1}{2}\Delta_{\lambda}\Big(\big(h(\alpha)y+Ty\big)\otimes y+\big(h(\alpha)y'+Ty'\big)\otimes y'\Big). 
  \end{equation} 
 This implies that  the rank of $T$ and $T^*$ are less than two. Moreover $\mathcal{R}(T), \mathcal{R}(T^*)\subseteq span\{y,y'\}$.  Since $T^*y=\overline{h(\alpha )}y$ and $T^*y'=\overline{h(\beta )}y'$, then $T= h(\alpha )y\otimes y+h(\beta )y'\otimes y'$. Therefore 
 $$T=\Phi(\alpha P+\beta P')=h(\alpha)\Phi(P)+h(\beta)\Phi(P'),$$
 which is the desired equality.
 \end{proof} 
 
 \bigskip

 If  $\Phi$ satisfies  (\ref{c1}), then $\Phi$ preserves the set of rank one projection and  preserves also  the orthogonality between projections (see Theorem \ref{cor2} (iii) and (iv)). 
 Then  for two vectors $x,x' \in H$ such that $\|x\|=\|x'\|=1$ and $<x,x'>=0$, there exist two vectors $y,y' \in K$ such that 
 $$\| y\|=\|y'\|=1, \: <y, y'>=0, \: \: \text{and} \:  \: \Phi(x\otimes x)=y\otimes y, \: \Phi(x'\otimes x')=y'\otimes y'.$$
  With the preceding notations, we have the following lemma 
  
 \begin{lem}\label{l5}  Let $\Phi:\B(H)\to\B(K)$ be a bijective map satisfying  (\ref{c1}).
Then there exists $\mu \in\C$ such that $|\mu|=1$ and  
$$\Phi(x\otimes x'+x'\otimes x)=\mu y\otimes y'+ \bar{\mu}y'\otimes y).$$
In addition, we obtain
$$h(2)=2 \;\;  \text{and }\;\;h(\frac{1}{2}) = \frac{1}{2}.$$
\end{lem}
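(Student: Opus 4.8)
The plan is to study the self-adjoint operator $S=x\otimes x'+x'\otimes x$ and to recover the $2\times 2$ matrix of $T:=\Phi(S)$ in the orthonormal pair $\{y,y'\}$. First I would record two spectral facts about $S$. Writing $u_{\pm}=\tfrac{1}{\sqrt2}(x\pm x')$, the operators $R_{\pm}=u_{\pm}\otimes u_{\pm}$ are orthogonal rank one projections with $R_+\perp R_-$, and a direct expansion of the rank one products gives $S=R_+-R_-$ together with $S^2=R_++R_-=P+P'$, the orthogonal projection onto $\mathrm{span}\{x,x'\}$. In particular $S$ is self-adjoint, hence quasi-normal, which is what will let me apply the transport results.

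Next I would move these facts to the $K$ side. Since $S=1\cdot R_++(-1)\cdot R_-$ with $R_+\perp R_-$, Lemma \ref{l4} together with $h(1)=1$ and $h(-1)=-1$ yields $T=\Phi(R_+)-\Phi(R_-)$; because $\Phi$ preserves rank one projections and orthogonality (Theorem \ref{cor2}~(vii),(iv)), I may write $\Phi(R_+)=z\otimes z$ and $\Phi(R_-)=z'\otimes z'$ with $\{z,z'\}$ orthonormal, so $T=z\otimes z-z'\otimes z'$. On the other hand $S$ is quasi-normal, so by Theorem \ref{cor2}~(ii),(vi) we get $T^2=\Phi(S^2)=\Phi(P)+\Phi(P')=y\otimes y+y'\otimes y'$. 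Comparing this with $T^2=z\otimes z+z'\otimes z'$ shows that both are the orthogonal projection onto $W:=\mathrm{span}\{y,y'\}=\mathrm{span}\{z,z'\}$. Hence $T$ is a self-adjoint operator vanishing on $W^{\perp}$, with eigenvalues $\pm1$ and $\mathrm{tr}(T)=0$.

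The engine of the proof is the elementary identity $S\circ P=\tfrac12 S$, obtained by a one-line computation with rank one operators. Applying (\ref{c1}) with $A=S$ and $B=P$, and using that $\tfrac12 S$ is self-adjoint, hence quasi-normal and fixed by $\Delta_{\lambda}$ through (\ref{qn}), the right hand side becomes $\Phi(\Delta_{\lambda}(\tfrac12 S))=\Phi(\tfrac12 S)=h(\tfrac12)T$ by Proposition \ref{cor3}~(iv). The left hand side is $\Delta_{\lambda}(T\circ Q)=T\circ Q$, since $T\circ Q$ is self-adjoint, where $Q=y\otimes y$. This gives the crucial relation $T\circ Q=h(\tfrac12)\,T$. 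I would then read off the matrix of $T$ in the basis $\{y,y'\}$ of $W$, say $T=\left(\begin{smallmatrix} a & b\\ \bar b & c\end{smallmatrix}\right)$ with $a,c\in\R$, against $Q=\left(\begin{smallmatrix}1&0\\0&0\end{smallmatrix}\right)$, so that $T\circ Q=\left(\begin{smallmatrix} a & b/2\\ \bar b/2 & 0\end{smallmatrix}\right)$; the relation then forces $a=h(\tfrac12)a$, $h(\tfrac12)c=0$ and $b/2=h(\tfrac12)b$. Since $\mathrm{tr}(T)=a+c=0$, the case $a\neq0$ would yield simultaneously $h(\tfrac12)=1$ and $h(\tfrac12)=0$, a contradiction; hence $a=c=0$, then $b\neq0$ (otherwise $T=0$, against the eigenvalues $\pm1$), so $h(\tfrac12)=\tfrac12$ and $T=b\,y\otimes y'+\bar b\,y'\otimes y$. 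The eigenvalues $\pm|b|$ of this matrix equal $\pm1$, so $|b|=1$; setting $\mu=b$ gives the asserted form, and $h(2)=2$ follows from $h(2)h(\tfrac12)=h(1)=1$.

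The main obstacle is exactly the last step: ruling out the degenerate possibility $b=0$, i.e.\ that $T$ is diagonal in the basis $\{y,y'\}$. This is the point where the trace-zero constraint on $T$ and the multiplicativity of $h$ must be played against one another, and it is also what simultaneously pins down $h(\tfrac12)=\tfrac12$. Everything upstream is there to make this possible: choosing the self-adjoint test operator $S$, diagonalizing it as $R_+-R_-$ to apply Lemma \ref{l4}, using $S^2=P+P'$ to force $\mathrm{span}\{z,z'\}=W$, and noticing the identity $S\circ P=\tfrac12 S$ so that (\ref{c1}) and Proposition \ref{cor3} collapse to a clean $2\times2$ linear algebra computation.
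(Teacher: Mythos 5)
Your proof is correct and follows essentially the same route as the paper: both diagonalize $S=x\otimes x'+x'\otimes x$ as a difference of orthogonal rank one projections, invoke Lemma \ref{l4} and Theorem \ref{cor2} to get a trace-zero self-adjoint $T$ with $T^2=y\otimes y+y'\otimes y'$, and then exploit the identity $S\circ P=\tfrac12 S$ through (\ref{c1}). The only cosmetic difference is that you extract $h(\tfrac12)=\tfrac12$ directly from the relation $T\circ Q=h(\tfrac12)T$ via a $2\times 2$ matrix computation and deduce $h(2)=2$ by multiplicativity, whereas the paper works with $A=2A\circ(x\otimes x)$, pins $h(2)=2$ first by evaluating at $y$, and then gets $h(\tfrac12)=\tfrac12$.
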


\begin{proof} Let us denote by $A=x\otimes x'+x'\otimes x$. First, note that  $A$ is self adjoint operator  of rank  two, and we have
$$A^2=x\otimes x+x'\otimes x',$$
which is an non-trivial orthogonal projection. Therefore, the spectrum of $A$ is $\sigma(A)=\{-1,0,1\}$. Hence we can find $f_1, f_2$ two unit and orthogonal vectors from 
$H$ ~~($\|f_1\|=\|f_2\|=1$ and $<f_1,f_2>=0$) \: such that 
\begin{equation}\label{eq3.9}
A= f_1 \otimes f_1-f_2 \otimes f_2.
\end{equation}

 Let us denote  $T=\Phi(A)$. By Lemma \ref{l4}, we have 
$$T=\Phi(f_1 \otimes f_1-f_2 \otimes f_2)=\Phi(f_1 \otimes f_1)+h(-1)\Phi(f_2 \otimes f_2)=\Phi(f_1 \otimes f_1)-\Phi(f_2 \otimes f_2),$$
 Now, since $\Phi(f_1 \otimes f_1)$ and $\Phi(f_2 \otimes f_2)$ are two orthogonal projections, then we deduce that  $T$ is a self adjoint operator of rank two, and  
$$tr(T)=tr(\Phi(f_1 \otimes f_1))-tr(\Phi(f_2 \otimes f_2))=0.$$

In the other hand, since $A$  is self adjoint, by   Theorem \ref{cor1} (ii) and (vi) , we get
 $$T^2= \Phi(A)^2=\Phi(A^2)=\Phi(x\otimes x+x'\otimes x') = y\otimes y + y'\otimes y'.$$
Moreover, 
\begin{equation}\label{eq15}
T^2=y\otimes y+y'\otimes y'.
\end{equation}
It follows that $y,y'\in \mathcal{R}(T)$, and thus 
$$\mathcal{R}(T)=span\{y,y'\}.$$

Clearly,  $A=2 A \circ (x\otimes x)$.  By (\ref{c1}), we get  
\begin{eqnarray*}
T=\Delta_{\lambda}(T)&=&\Delta_{\lambda}(\Phi(2 A \circ (x\otimes x))\\&=&h(2)\Phi(\Delta_{\lambda}(A\circ (x\otimes x)))\\&=&h(2)\Delta_{\lambda}(\Phi(A)\circ \Phi(x\otimes x)))\\&=&h(2)\Delta_{\lambda}(T\circ (y\otimes y)).
\end{eqnarray*}
Therefore 
\begin{equation}\label{eq16}
T=\frac{h(2)}{2}\Delta_{\lambda}(Ty\otimes y+y\otimes T^*y)=\frac{h(2)}{2}(Ty\otimes y+y\otimes Ty).
\end{equation}
If follows that 
$$tr(T)=h(2)<Ty,y>=0.$$
 Hence $<Ty,y>=0$. Since $\{y,y'\}$ is an orthonormal basis of $\mathcal{R}(T)$ then there exists $\mu \in\C$ such that $Ty=\mu y'$.
  From (\ref{eq16}) we get 
 \begin{equation}
 T=\frac{h(2)}{2}(\mu y'\otimes y+ \bar{\mu}y\otimes y'). 
 \end{equation}
 
  In particular, we also have $Ty=\frac{h(2)}{2} \mu y'=\mu y'$, it follows that 
  $$h(2)=2 \quad \text{and thus} \quad h(\frac{1}{2}) = \frac{1}{2} \; \:  \text{(since $h$ multiplicative and $h(1) = 1$)} .$$
Hence 
$$T=\mu y'\otimes y+ \bar{\mu }y\otimes y',$$
and we have 
$$T^2=|\mu|^2 (y\otimes y+y'\otimes y')=y\otimes y+y'\otimes y'.$$
 Hence $|\mu|=1$. This completes the proof.
\end{proof}

As a direct consequence from the preceding lemma, we have the following corollary.
 \begin{cor}
The function $h:\C\to \C$ defined in Proposition \ref{cor3} is additive.
\end{cor}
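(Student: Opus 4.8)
The plan is to derive additivity directly from Lemma~\ref{l5} by producing a single operator identity on which condition (\ref{c1}) forces $h(\alpha+\beta)=h(\alpha)+h(\beta)$. Fix two orthonormal vectors $x,x'\in H$ and set $P=x\otimes x$, $P'=x'\otimes x'$, and $A=x\otimes x'+x'\otimes x$, exactly as in Lemma~\ref{l5}. Write $\Phi(P)=y\otimes y$, $\Phi(P')=y'\otimes y'$ with $\langle y,y'\rangle=0$, and, by Lemma~\ref{l5}, $\Phi(A)=\mu\,y\otimes y'+\bar\mu\,y'\otimes y$ with $|\mu|=1$; in particular $\Phi(A)\neq0$ and $\Phi(A)$ is self-adjoint. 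For arbitrary $\alpha,\beta\in\C$ put $E=\alpha P+\beta P'$; by Lemma~\ref{l4}, $\Phi(E)=h(\alpha)\,y\otimes y+h(\beta)\,y'\otimes y'$.

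The key point is that both Jordan products $E\circ A$ and $\Phi(E)\circ\Phi(A)$ collapse onto scalar multiples of $A$ and of $\Phi(A)$, respectively. A routine rank-one computation (using $PA=x\otimes x'$, $AP=x'\otimes x$, $P'A=x'\otimes x$, $AP'=x\otimes x'$) gives $E\circ A=\tfrac{\alpha+\beta}{2}A$, and the very same computation carried out in $K$ with diagonal coefficients $h(\alpha),h(\beta)$ and off-diagonal term $\Phi(A)$ gives $\Phi(E)\circ\Phi(A)=\tfrac{h(\alpha)+h(\beta)}{2}\Phi(A)$. Since $A$ is self-adjoint, hence quasi-normal and fixed by $\Delta_\lambda$, and since $\Delta_\lambda$ is homogeneous, Proposition~\ref{cor3}(iv) together with the multiplicativity of $h$ and $h(\tfrac12)=\tfrac12$ (Lemma~\ref{l5}) yields
\begin{equation*}
\Phi\big(\Delta_\lambda(E\circ A)\big)=\Phi\big(\tfrac{\alpha+\beta}{2}A\big)=h\big(\tfrac{\alpha+\beta}{2}\big)\,\Phi(A)=\tfrac{1}{2}\,h(\alpha+\beta)\,\Phi(A),
\end{equation*}
while, using that $\Phi(A)$ is self-adjoint and $\Delta_\lambda$ homogeneous,
\begin{equation*}
\Delta_\lambda\big(\Phi(E)\circ\Phi(A)\big)=\Delta_\lambda\big(\tfrac{h(\alpha)+h(\beta)}{2}\Phi(A)\big)=\tfrac{h(\alpha)+h(\beta)}{2}\,\Phi(A).
\end{equation*}

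Condition (\ref{c1}), applied to the pair $(E,A)$, equates the left-hand sides of these two displays, so $\tfrac{1}{2}h(\alpha+\beta)\,\Phi(A)=\tfrac{1}{2}\big(h(\alpha)+h(\beta)\big)\Phi(A)$; dividing by the nonzero operator $\Phi(A)$ gives $h(\alpha+\beta)=h(\alpha)+h(\beta)$ for all $\alpha,\beta\in\C$. I expect the only delicate points to be the two Jordan-product identities, which must both be seen to reduce to multiples of $A$ (resp.\ $\Phi(A)$)---this is exactly why $A$ is taken to be the off-diagonal operator of Lemma~\ref{l5} and $E$ its diagonal companion---and the careful bookkeeping of the scalar $h(\tfrac12)h(\alpha+\beta)=\tfrac12 h(\alpha+\beta)$; everything else is a direct application of the homogeneity of $\Delta_\lambda$ and the properties of $h$ already established.
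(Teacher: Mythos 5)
Your proof is correct and takes essentially the same route as the paper's: the same operators $P=x\otimes x$, $P'=x'\otimes x'$, $A=x\otimes x'+x'\otimes x$ and $B=\alpha P+\beta P'$, the same key identity $B\circ A=\tfrac{\alpha+\beta}{2}A$, and the same use of Lemma~\ref{l4}, Lemma~\ref{l5} (for $\Phi(A)$, $h(\tfrac12)=\tfrac12$), Proposition~\ref{cor3}(iv) and condition~(\ref{c1}) to equate $\tfrac12 h(\alpha+\beta)\Phi(A)$ with $\tfrac12\bigl(h(\alpha)+h(\beta)\bigr)\Phi(A)$. The only cosmetic difference is that you compute $\Phi(B)\circ\Phi(A)=\tfrac{h(\alpha)+h(\beta)}{2}\Phi(A)$ directly from the explicit rank-one forms, while the paper gets the same scalar via distributing the Jordan product and $\Phi(P)\circ\Phi(A)=\tfrac12\Phi(A)$.
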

\begin{proof}
Let $x,x'\in H$ be two unit and orthogonal vectors for $H$ and $\alpha, \beta \in \C$. 
Denote 
$$P=x\otimes x,  \; P'=x'\otimes x', \: \:  Q=\Phi(P)=y\otimes y, \; Q'=\Phi(P')=y'\otimes y'.$$
Put  $\; A=x\otimes x'+x'\otimes x$,   and $B=\alpha P + \beta P' $.
Observe that 
$$A\circ P=\frac{1}{2} A\; \; \text{and}\;\; A\circ P'=\frac{1}{2}A,$$ 
thus  
$$A\circ B=\dfrac{\alpha+\beta}{2} A.$$
 By Lemma \ref{l5},  there exists $\mu \in\C$ such that $|\mu|=1$ and  
 $$\Phi(A)=\mu y\otimes y'+ \bar{\mu} y'\otimes y.$$
 Then, by a simple  calculation, we get
 $$ \Phi(P)\circ\Phi( A)= Q\circ\Phi( A)=\frac{1}{2}\Phi( A).$$ 
  Hence, using (\ref{c1}), we immediately obtain the following
\begin{eqnarray*}
h(\frac{\alpha+\beta}{2})\Phi(A)&=& \Phi(\frac{\alpha+\beta}{2}A) = \Phi(\Delta_{\lambda}(\dfrac{\alpha+\beta}{2}(A)))
\\&=& \Phi(\Delta_{\lambda}( B\circ A)) = \Delta_{\lambda}(\Phi(B)\circ\Phi( A))\\&=&
\Delta_{\lambda}\Big(\Phi(\alpha P + \beta P')\circ\Phi( A)\Big)\\&=&\Delta_{\lambda}\Big(\Phi(\alpha P)\circ\Phi( A) + \Phi(\beta P')\circ\Phi( A)\Big)\\&=&
\Delta_{\lambda}\Big(h(\alpha) \Phi(P)\circ\Phi( A) + h(\beta) \Phi(P')\circ\Phi( A)\Big)
 \\&=&\frac{1}{2} ( h(\alpha)+h(\beta))\Phi(A).
\end{eqnarray*}
Since $h(1/2)=1/2$, we have
$$\frac{1}{2} h(\alpha +\beta)=h(\dfrac{\alpha+\beta}{2})=\frac{1}{2}(h(\alpha)+h(\beta)).$$
Thus $h(\alpha+\beta)=h(\alpha)+h(\beta)$. It follows  that $h$ is additive.
\end{proof}

\bigskip

Now, we are in position to prove our main Theorem. \\

{\it   \bf Proof of Theorem \ref{th1}.} 
\begin{proof} The "if" part is  immediate.  

We show the "only if" part. Assume that $\Phi :\B(H)\to\B(K)$ is bijective and  satisfies  (\ref{c1}).   The proof of theorem is organized in several steps.

\bigskip

\begin{enumerate}[\bf Step. 1]
\item For every  $A\in\B(H)$, for all $x,y \, \text{such that} \; \Vert x\Vert = \Vert y\Vert = 1 \, \text{and} \,  \Phi (x\otimes x)=y\otimes y$, we have 
 \begin{equation}\label{eq}
<\Phi (A)y,y>=h(<Ax,x>), 
  \end{equation}

  Put  $\; T=  2(A\circ (x\otimes x)) = Ax\otimes x+x\otimes A^*x$. 
  Let  $\alpha_1, \alpha_2$ be the non zero  eigenvalues  of $T$. By the Schur decomposition  of $T$, there exist two unit and orthogonal vectors $e_1,e_2$ such that
 $$T=\alpha_1 e_1\otimes e_1+\alpha_2 e_2\otimes e_2+ \beta e_1\otimes e_2.$$ 
First, we show that 
  \begin{equation}\label{eq'}
tr(\Delta_{\lambda}(\Phi(T)))=2h(<Ax,x>). 
  \end{equation}
 If $\alpha_1=\alpha_2=0$, we have $T=\beta e_1\otimes e_2$,  thus $T^2 = 0$ and $\Delta_{\lambda}(T) = 0$, by Lemma 2.4. 
 Hence $\Delta_{\lambda}(\Phi(T))=\Phi(\Delta_{\lambda}(T))=0$, since $\Phi$ commutes with $\Delta_{\lambda}$. Consequently,
 $$tr(T)=2<Ax,x> = 0 = tr(\Delta_{\lambda}(\Phi(T))),$$
  and, in this case,   (\ref{eq'}) is satisfied. 
 
 Now suppose that $(\alpha_1,\alpha_2)\not=(0,0)$.
 
  First, note that    
 $$T\circ (e_1\otimes e_1)=\frac{1}{2}(e_1\otimes (\bar{\beta} e_2+2\bar{\alpha_1} e_1))=\frac{1}{2}(e_1\otimes v), $$
 
 with  $v=\bar{\beta} e_2+2\bar{\alpha_1} e_1 \;\;\text{and }\;\; <e_1,v>=2\alpha_1.$
From Proposition 2.1, it follows that  
$$\Delta_{\lambda}(T\circ (e_1\otimes e_1))=\frac{1}{2}\Delta_{\lambda}((e_1\otimes v)=\alpha_1\frac{1}{\|v\|^2}(v\otimes v) .$$
By  (\ref{c1}), we get 
\begin{eqnarray*}
h(\alpha_1)\Phi(\frac{1}{\|v\|^2}(v\otimes v))&=& \Phi(\Delta_{\lambda}(T\circ (e_1\otimes e_1)))\\&=&\Delta_{\lambda} (\Phi(T)\circ \Phi(e_1\otimes e_1)).
\end{eqnarray*} 
 We have also 
$$T\circ ( e_2\otimes e_2)=\frac{1}{2}(\beta e_1+2\alpha_2 e_2)\otimes e_2. $$ 
Hence  
$$\Delta_{\lambda}(T\circ ( e_2\otimes e_2))=\alpha_2 e_2\otimes e_2.$$
Again (\ref{c1}) implies that  
\begin{eqnarray*}
h(\alpha_2)\Phi(e_2\otimes e_2)&=&\Phi(\Delta_{\lambda}(T\circ ( e_2\otimes e_2)))\\&=&\Delta_{\lambda} (\Phi(T)\circ \Phi(e_2\otimes e_2))
\end{eqnarray*}
Now, observe  that $T\circ (e_1\otimes e_1 +e_2\otimes e_2)=T$.  We apply again (\ref{c1}), then 
\begin{eqnarray*}
\Delta_{\lambda}(\Phi(T))&=&\Phi(\Delta_{\lambda}(T\circ (e_1\otimes e_1 +e_2\otimes e_2)))\\&=&
\Delta_{\lambda}(\Phi(T)\circ \Phi(e_1\otimes e_1 +e_2\otimes e_2))\\&=&
\Delta_{\lambda}(\Phi(T)\circ \Phi(e_1\otimes e_1) +\Phi(T)\circ\Phi(e_2\otimes e_2)). 
\end{eqnarray*}
Since $\Phi(\frac{1}{\|v\|^2}(v\otimes v))$ and $\Phi(e_2\otimes e_2)$ are orthogonal projections,   then  
\begin{eqnarray*}
tr(\Delta_{\lambda}(\Phi(T)))&=&tr(\Delta_{\lambda}(\Phi(T)\circ \Phi(e_1\otimes e_1) +\Phi(T)\circ\Phi(e_2\otimes e_2)))\\&=&tr(\Phi(T)\circ \Phi(e_1\otimes e_1) +\Phi(T)\circ\Phi(e_2\otimes e_2))\\&=&tr(\Phi(T)\circ \Phi(e_1\otimes e_1))+tr( \Phi(T)\circ\Phi(e_2\otimes e_2))\\&=&tr(h(\alpha_1)\Phi(\frac{1}{\|v\|^2}(v\otimes v))+tr(h(\alpha_2)\Phi(e_2\otimes e_2))\\&=&h(\alpha_1)+h(\alpha_2)\\&=&h(\alpha_1+\alpha_2)=h(tr(T))\\&=&2h(<Ax,x>).
\end{eqnarray*}
Now, we show that 
\begin{equation}\label{eq''}
tr(\Delta_{\lambda}(\Phi(T))=2<\Phi(A)y,y>. 
\end{equation}
By (\ref{c1}) and  $h(2)=2$,  we get 
\begin{eqnarray*}
\Delta_{\lambda}(\Phi(T))&=&\Delta_{\lambda}(\Phi(2A\circ (x\otimes x))=\Delta_{\lambda}(h(2)\Phi(A\circ (x\otimes x))\\&=&h(2)\Phi(\Delta_{\lambda}(A\circ (x\otimes x)))\\&=&2\Delta_{\lambda}( \Phi(A)\circ \Phi(x\otimes x))\\&=&2\Delta_{\lambda}( \Phi(A)\circ (y\otimes y)).
\end{eqnarray*}
Thus
\begin{eqnarray*}
 tr(\Delta_{\lambda}(\Phi(T)) &=& 2tr(\Delta_{\lambda}( \Phi(A)\circ (y\otimes y)))=2tr( \Phi(A)\circ (y\otimes y))\\
 &=& 2<\Phi(A)y,y>.
\end{eqnarray*}
From (\ref{eq'}) and (\ref{eq''}), it  follows  that
$$ <\Phi(A)y,y>=h(<Ax,x>),$$
 for all $A\in\B(H)$ and for all $x\in H$ and $y\in H$  such $\Phi(x\otimes x)=y\otimes y$, which completes the proof of (3.13).

\bigskip

 \item The function $h$ is continuous. 

  Let   $\mathcal{E}$ be a bounded subset  in $\C$ and  $A\in\B(H)$  such that  $\mathcal{E}\subset W(A)$, where $W(A)$ is the numerical range of A.
By   (\ref{eq}),  
 \begin{equation*}
h(\mathcal{E})\subset h(W(A))   = W(\Phi (A))
 \end{equation*}
  Since  $W(\Phi (A))$ is bounded,    $h$ is bounded on the bounded subset, which implies that $h$ is continuous,  since it  is additive (see Corollary 3.1). 
   We then have, using Proposition 3.2 (ii) and Corollary 3.1,  that $h$ is an automorphism continuous   over the complex field $\C$.
    It follows that $h$ is the identity or the  complex conjugation map   (see, for example, \cite{ks}). 

\bigskip

\item $\Phi $ is linear or anti-linear. 
  
 Let  $y\in K$ and $x\in H$ be unital  vectors such that $y \otimes y=\Phi ( x\otimes x)$. Let $\alpha\in \C$ and $A ,B\in \B(H)$ be arbitrary. Using (\ref{eq}), we get   
  \begin{eqnarray*}
 <\Phi (A+B)y,y>&=&h(<(A+B)x,x>)\\&=&h(<Ax,x>+<Bx,x>)\\&=&h(<Ax,x>)+h(<Bx,x>)
 \\&=&<\Phi (A)y,y>+<\Phi (B)y,y>\\&=&<(\Phi (A)+\Phi (B))y,y>,
  \end{eqnarray*}
and 
\begin{eqnarray*}
 <\Phi (\alpha A)y,y>&=&h(<\alpha A x,x>)\\
 &=& h(\alpha)h(<Ax,x>) \\
 &=& h(\alpha)<\Phi (A)y,y>.
\end{eqnarray*}
   
  Hence, we immediately obtain the following 
   $$\forall A,B\in\B(H), \forall  \alpha \in \C, \; \;   \Phi (A+B)=\Phi (A)+\Phi (B)  \: \text{and} \: \Phi (\alpha A)=h(\alpha)\Phi (A).$$ 
   Therefore  $\Phi $ is linear or anti-linear,  since $h$ is the identity or the complex conjugation. 
   
   \bigskip
   
\item $\Phi (A)=UAU^*$  every $A\in \B(H)$, for some  unitary  operator  $U\in \B(H,K)$. 
   By  Step.3, we have  $\Phi$ or  $\Phi^*$ is linear, where $\Phi^*$  is defined by $\Phi^*(A)=(\Phi(A))^*$.  From Theorem 3.1 (i),  $\Phi$  commute
    with $\Delta_{\lambda}$. Now, by \cite[Theorem 1]{bmg},   there exists a unitary operator $V: H\to K$, such that  $\Phi$ take one of the following forms 
   \begin{equation}\label{f1}
   \Phi(A)=VAV^* \;\; \text{for all}\;\; A\in\B(H),
   \end{equation}
 either 
   \begin{equation}\label{f2}
   \Phi(A)=VA^*V^* \;\; \text{for all}\;\; A\in\B(H).
   \end{equation}
   In order to complete the proof we have to show $\Phi $ can not take the form (\ref{f2}).
   
   Seeking a contradiction,   suppose that (\ref{f2}) holds. Then  for every $\; A\in\B(H)$,
   \begin{eqnarray*}
   {\Delta_{\lambda }}(A^{*})&=&  {\Delta_{\lambda }}(V^*\Phi(A)V) \\
   &=& V^*{\Delta_{\lambda }}(\Phi(A)) V\\
   &=&  V^*\Phi({\Delta_{\lambda }}(A))V\\
   &=& ({\Delta_{\lambda }}(A))^*.
   \end{eqnarray*}
   
Therefore 
\begin{equation}\label{ee}
{\Delta_{\lambda }}(A^{*})= ({\Delta_{\lambda}}(A))^{*},\quad \mbox{ for every} \; A\in\B(H).
\end{equation}
Now, let us consider  $A=x\otimes x'$ with $x, x'$  are  unit,  independent and non-orthogonal vectors  in $H$. Then $A^*=x'\otimes x$ and  by Proposition \ref{p1}, we have 
 $$ ({\Delta_{\lambda}}(A))^*=<x', x>(x'\otimes x') \; \;  \mbox{and} \; \; {\Delta_{\lambda}}(A^{*})=<x', x>(x\otimes x),$$
which contradicts  (\ref{ee}).  So if we take $U = V$,  $\; \Phi$  gets the  form 
$$ \Phi(A)=UAU^*,  \;\; \text{for all}\;\; A\in\B(H).$$
\end{enumerate}
 This completes the proof of our main theorem.
\end{proof}

\section{Star Jordan Product Commuting  Maps  }

We end this paper by  characterizing  the  bijective maps   $\Phi :\B(H)\to\B(K)$,  which verify   
  \begin{equation}\label{c2}
\Delta_{\lambda}(\Phi(A)\circ (\Phi(B))^*)=\Phi(\Delta_{\lambda}(A \circ B^*)) \quad \text{for all} \; \;  A, B\in \B(H).
 \end{equation}

  \begin{thm} \label{th3}
 Let $H$ and $K$ two  complex Hilbert space, such that $H$ is of dimension greater than $2$. Let   $\Phi :\B(H)\to\B(K)$ be a bijective map. 
 Then $\Phi$ satisfies  (\ref{c2}), if and only if, there exists  an unitary operator $U : H\to K $ such that  
 $$\Phi(A)=UAU^*\;\;\;\;\text{ for all $A\in\B(H)$.}$$
 \end{thm}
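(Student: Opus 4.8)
The plan is to mimic the entire development of Sections 2--3, but with the Jordan product $A\circ B$ systematically replaced by the star-Jordan product $A\circ B^{*}$, and to harvest as much as possible from the structural lemmas that are formulated without reference to the product at all. First I would record that the key preparatory results of Section 2 --- Proposition \ref{p1}, Propositions \ref{l1} and \ref{l2}, Lemmas \ref{l33}, \ref{l22}, Corollary \ref{l3}, and Lemma \ref{00} --- are statements about $\Delta_{\lambda}$ of a \emph{single} operator and its polar decomposition, so they carry over verbatim and are available here. The whole game is therefore to re-run the argument of Proposition \ref{cor1} and Theorem \ref{cor2} using Condition (\ref{c2}) in place of (\ref{c1}).

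\textbf{Normalization and the self-adjoint fixed points.}
I would first prove $\Phi(0)=0$ and $\Phi(I)=I$ exactly as in Proposition \ref{cor1}: choosing $B$ with $\Phi(B)=y\otimes y$ (so $\Phi(B)^{*}=y\otimes y$ as well, since $y\otimes y$ is self-adjoint), the nilpotency argument via Lemma \ref{00} and the injectivity of $T=\Phi(I),T^{*}$ go through unchanged, and plugging $A=B=I$ into (\ref{c2}) gives $\Delta_{\lambda}(T\circ T^{*})$ rather than $\Delta_{\lambda}(T^{2})$ --- here I expect to need a small variant of Corollary \ref{l3} handling $\Delta_{\lambda}(TT^{*}{+}T^{*}T)/2$, or else to observe that the identity is recovered once one knows $\Phi(I)$ is self-adjoint. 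The main novelty is that with the adjoint present, setting $B=I$ in (\ref{c2}) now yields $\Delta_{\lambda}(\Phi(A)\circ I)=\Delta_{\lambda}(\Phi(A))$ against $\Phi(\Delta_{\lambda}(A\circ I))=\Phi(\Delta_{\lambda}(A))$, so the crucial intertwining $\Delta_{\lambda}\Phi=\Phi\Delta_{\lambda}$ of Theorem \ref{cor2}(i) survives; from it one again gets preservation of quasi-normal operators and of orthogonal projections, and the multiplicative/additive function $h$ of Proposition \ref{cor3} and its Corollary is built the same way, since all the projections fed into (\ref{c2}) are self-adjoint and thus insensitive to the star.

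\textbf{The bridge lemmas and Step 1.}
With projections being self-adjoint, Lemmas \ref{l4} and \ref{l5} and the additivity of $h$ reproduce line for line, and the trace computation of Step 1 of the main proof gives the same identity $\langle\Phi(A)y,y\rangle=h(\langle Ax,x\rangle)$ whenever $\Phi(x\otimes x)=y\otimes y$; the continuity of $h$ then follows from the numerical-range inclusion as before, forcing $h$ to be the identity or complex conjugation, and hence $\Phi$ to be linear or anti-linear. The hard part, and the only place where (\ref{c2}) genuinely diverges from (\ref{c1}), is the final Step 4. Invoking \cite[Theorem 1]{bmg} again gives $\Phi(A)=VAV^{*}$ or $\Phi(A)=VA^{*}V^{*}$; but now the \emph{opposite} form will be excluded. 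Under Condition (\ref{c2}) the form $\Phi(A)=VA^{*}V^{*}$ is compatible --- it turns the star-Jordan relation into $\Delta_{\lambda}(A^{*}\circ A)=(\Delta_{\lambda}(A\circ A^{*}))^{*}$-type identities that hold --- whereas $\Phi(A)=VAV^{*}$ would force $\Delta_{\lambda}(A\circ B^{*})=\Delta_{\lambda}(A\circ B^{*})$ to collide with an adjoint, and I would derive the contradiction by testing on a rank-one $A=x\otimes x'$ with $x,x'$ independent and non-orthogonal, exactly as in (\ref{ee}), but with the roles reversed. Thus I expect the conclusion to again be $\Phi(A)=UAU^{*}$, the surviving form simply being selected by the star in (\ref{c2}); pinning down which of the two forms is eliminated, and checking the trivial-case degeneracies, is the step I would treat most carefully.
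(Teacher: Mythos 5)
Your overall architecture---rerun the machinery of Sections 2--3 with $A\circ B^{*}$ in place of $A\circ B$, exploit the self-adjointness of projections, and finish via \cite[Theorem 1]{bmg}---is exactly the paper's route (the paper only sketches it in the remark following the theorem, listing $\Phi(0)=0$, $\Phi(I)=I$, the two intertwinings $\Delta_{\lambda}(\Phi(A))=\Phi(\Delta_{\lambda}(A))$ and $\Delta_{\lambda}(\Phi(A)^{*})=\Phi(\Delta_{\lambda}(A^{*}))$, preservation of quasi-normals, and preservation of self-adjoint operators). But your Step 4 is inverted, and this is a genuine error, not a hedgeable detail. If $\Phi(A)=VAV^{*}$, then $\Phi(B)^{*}=VB^{*}V^{*}$, so $\Delta_{\lambda}(\Phi(A)\circ\Phi(B)^{*})=\Delta_{\lambda}\bigl(V(A\circ B^{*})V^{*}\bigr)=V\Delta_{\lambda}(A\circ B^{*})V^{*}=\Phi(\Delta_{\lambda}(A\circ B^{*}))$: the identity form is fully compatible with (\ref{c2}), the star simply passing through the unitary conjugation. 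Conversely, if $\Phi(A)=VA^{*}V^{*}$, then taking $B=I$ in (\ref{c2}) forces $\Delta_{\lambda}(A^{*})=(\Delta_{\lambda}(A))^{*}$ for all $A$, which fails by the same rank-one test $A=x\otimes x'$ from Proposition \ref{p1} used in (\ref{ee}). So it is again the \emph{adjoint} form that is excluded, by the very same computation as in Theorem \ref{th1}; your claim that ``$VA^{*}V^{*}$ is compatible whereas $VAV^{*}$ collides with an adjoint'' is backwards and contradicts your own final conclusion. Had you carried out the reasoning as written, you would have ``proved'' $\Phi(A)=UA^{*}U^{*}$, which is false.

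Two smaller points. At $\Phi(I)=I$ you anticipate needing ``a small variant of Corollary \ref{l3}'' to handle $\Delta_{\lambda}(T\circ T^{*})=T$; none is needed, and in fact the star makes this step easier, not harder: $T\circ T^{*}=\frac{1}{2}(TT^{*}+T^{*}T)$ is positive, hence quasi-normal, hence a fixed point of $\Delta_{\lambda}$ by (\ref{qn}), so $T=T\circ T^{*}$ is self-adjoint with $T^{2}=T\circ T^{*}=T$, and injectivity of $T$ gives $T=I$---this is the paper's argument, and it bypasses Lemma \ref{l33} and Corollary \ref{l3} entirely. You also omit the paper's point (v), that $\Phi$ preserves self-adjoint operators: taking $A=I$ in (\ref{c2}) gives $\Delta_{\lambda}(S^{*})=S$ for $S=\Phi(B)$ with $B$ self-adjoint, and the second Lemma \ref{l22} (the quasi-normal version, which the paper inserted in Section 2 precisely for this theorem) then yields $S=S^{*}$. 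Relatedly, your assertion that Proposition \ref{cor3} and Lemmas \ref{l4}--\ref{l5} ``reproduce line for line'' is slightly off: whenever the second argument of the product is not self-adjoint (e.g.\ $B=\alpha P$), the star produces $A\circ(\alpha P)^{*}=\bar{\alpha}\,(A\circ P)$, so complex conjugates enter the identities defining $h$ and must be tracked; this is routine, but it is an adjustment, not literal repetition.
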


�\begin{rem}  Based on the arguments and the methods  developed in the proof of Theorem 1.1,  it is not difficult to proof Theorem 4.1. For example, 
 If $\Phi :\B(H)\to\B(K)$ is a bijective map, satisfying (\ref{c2}), then 
 \begin{enumerate}[(i)]
 \item $\Phi(0)=0.$,
 \item $\Phi(I)=I.$,
 \item $\Delta_{\lambda}(\Phi(A))=\Phi(\Delta_{\lambda}(A))$ and $\Delta_{\lambda}((\Phi(A))^*)=\Phi(\Delta_{\lambda}(A^*))$ for all $A\in\B(H)$,
 \item $\Phi$ preserve the set of quasi-normal operator in both directions,
 \item $\Phi$ preserves the set of self adjoint operators in both directions. 
 \end{enumerate}

\begin{proof}
 (i) Since $\Phi$ is onto, then there exists  $B\in \B(H)$ such that $\Phi(B)=0$. Hence, by (\ref{c2}) $\Phi(0)=\Delta_{\lambda}(\Phi(0)\circ \Phi(B)^*)=0$. 
 
 (ii) Let us denote $T:=\Phi(I)$. First we show that $T$ is one-to-one. Let $y\in K$ such that $Ty=0$. Now since $\Phi$ is onto, there exists  $B\in\B(H)$ such that 
 $\Phi(B)=y\otimes y$. By (\ref{c2}), we have $\Delta_{\lambda}(T\circ y\otimes y)=y\otimes y$.  Thus $y\otimes y=0$ and  it follows that $y=0$. 
 
 Again from  (\ref{c2}) it  follows that $T\circ T^*=T$. Hence $T$ is self adjoint and $T^2=T\circ T^*=T$. Thus $T=I$ since $T$ is one-to-one.
 
 (iii) is immediate, and (iv) is deduced directly from (iii).
 
 (v) Let $B$ be a self adjoint operator, and $S:=\Phi(B)$.  By (iii) $S$ is quasi-normal operator, and by (\ref{c2}) with $A=I$ we get, that $\Delta(S^*)=S$. 
 By Lemma \ref{l22}, $\; S=S^*$ and thus $\Phi$ preserves the self adjoint operators.
  \end{proof}

The rest of the proof of Theorem 4.1 is similar to that of Theorem 1.1. We do not give details of those arguments.

\end{rem}

\begin {thebibliography}{99}

\bibitem {alu}    {\sc A. Aluthge},
\emph{} { \textit {On p-hyponormal operators for $0 < p <1$}}, Integral Equations
Operator Theory 13 (1990), 307-315. 

\bibitem{ay} {T. Ando and T. Yamazaki},
\emph{}{\textit {The iterated Aluthge transforms of a 2-by-2 matrix converge}}, Linear Algebra Appl. 375 (2003), 299-309

\bibitem{ams1} {\sc J. Antezana, P. Massey and D. Stojanoff},
\emph{}{ \textit {$\lambda$-Aluthge transforms and Schatten ideals}}, Linear Algebra Appl, 405 (2005), 177-199.

\bibitem{ams2} {\sc J. Antezana, P. Massey and D. Stojanoff},
\emph{}{ \textit {The iterated Aluthge transforms of a matrix converge}},  Adv. Math.  226 (2011), 1591-1620.

\bibitem{bmg} {\sc F. Botelho ; L. Moln\'ar ; G. Nagy},
\emph{}{ \textit { Linear bijections on von Neumann factors commuting with $\lambda$-Aluthge transform}}, 
Bull. Lond. Math. Soc. 48 (2016), no. 1, 74-84.   
 
\bibitem{chf} {F. Chabbabi},
\emph{}{\textit {Product  commuting maps  with the  $\lambda$-Aluthge transform}}, 2016, arXiv:1606.06165v1 

\bibitem{fur} {T. Furuta},
\emph{}{\textit {Invitation to linear operators}}, Taylor  Francis,  London 2001.

\bibitem{gar}{\sc S. R. Garcia}
\emph{}{\textit{Aluthge Transforms of Complex Symmetric
Operators }}, Integr. Equ. Oper. Theory  60 (2008), 357-367.

\bibitem{kp3} {\sc  I. Jung, E. Ko, and C. Pearcy },
\emph{}{ \textit { Aluthge transform of operators}}, Integral Equations Operator Theory 37 (2000), 437-448.

\bibitem{kp2} {\sc  I. Jung, E. Ko, C. Pearcy },
\emph{}{ \textit {Spectral pictures of Aluthge transforms of operators}}, Integral Equations Operator Theory 40 (2001), 52-60.

\bibitem{kp1}  {\sc  I. Jung, E. Ko, C. Pearcy },
\emph{}{\textit {The iterated Aluthge transform of an operator}},  
Integral Equations Operator Theory 45 (2003), 375-387.

\bibitem{ks} {\sc R. Kallman, R. Simmons}, 
\emph{}{ \textit {A theorem on planar continua and an application to automorphisms of the field of complex numbers}}, 
 Topology and its Applications 20 (1985), 251-255

\bibitem{oku} {\sc K. Okubo}, 
\emph{}{ \textit {On weakly unitarily invariant norm and the  Aluthge  transformation}}, Linear Algebra Appl. 371 (2003),  369--375.

\end{thebibliography}

\end{document}